\def\np{\bigskip}
\def\ddA{{\rm A}}
\def\ddD{{\rm D}}
\def\TL{{\rm TL}}
\def\STL{{\rm STL}}
\def\RTL{{\rm DTL}}
\def\RTLM{{\rm DTLM}}
\def\TLM{{\rm TLM}}
\def\Br{{\rm Br}}
\def\BrD{{\rm{ BrD}}}
\def\BrM{{\rm BrM}}
\def\ddB{{\rm B}}
\def\ddC{{\rm C}}
\def\ddD{{\rm D}}
\def\ddE{{\rm E}}
\def\ddF{{\rm F}}
\def\ddG{{\rm G}}
\def\hE{{\hat E}}
\def\eps{{\epsilon}}
\def\alp{{\alpha}}
\def\BrMD{{\rm BrMD}}
\def\lijntje{\vrule height2.4pt depth-2pt width0.5in}
\def\vlijntje{\vrule height0.45in depth0.4pt width0.4pt}
\def\vlijn{\buildrel {\hbox to 0pt{\hss$\textstyle\circ$\hss}}\over\vlijntje}
\def\dlijntje{{\vrule height2pt depth-1.6pt
width0.5in}\llap{\vrule height4pt depth-3.6pt width0.5in}}
\def\vtriple#1\over#2\over#3{\mathrel{\mathop{\kern0pt #2}\limits_{\hbox
to 0pt{\hss$#1$\hss}}^{\hbox to 0pt{\hss$#3$\hss}}}}
\def\rvtriple#1\over#2\over#3{\mathrel{\mathop{\kern0pt #2}\limits_{\hbox
to 0pt{\hss$#3$\hss}}^{\hbox to 0pt{\hss$#1$\hss}}}}
\def\tlijntje{{\vrule height1.7pt depth-1.3pt
width0.5in}\llap{\vrule height3.0pt depth-2.6pt width0.5in}\llap{\vrule height4.3pt depth-3.9pt width0.5in}
}
\def\Dm{\vtriple{\scriptstyle n+1}\over\circ\over{}\kern-1pt\lijntje\kern-1pt
\vtriple{\scriptstyle{n}}\over\circ\over{}
\cdots\cdots\vtriple{\scriptstyle 4}\over\circ\over{}\kern-1pt\lijntje\kern-1pt
\vtriple{\scriptstyle 3}\over\circ\over{\buildrel
{\scriptstyle 2}\over\vlijn}\kern-1pt\lijntje\kern-1pt
\vtriple{1}\over\circ\over{}\kern-1pt}
\def\Dn{\vtriple{\scriptstyle n}\over\circ\over{}\kern-1pt\lijntje\kern-1pt
\vtriple{\scriptstyle{n-1}}\over\circ\over{}
\cdots\cdots\vtriple{\scriptstyle 4}\over\circ\over{}\kern-1pt\lijntje\kern-1pt
\vtriple{\scriptstyle 3}\over\circ\over{\buildrel
{\scriptstyle 2}\over\vlijn}\kern-1pt\lijntje\kern-1pt
\vtriple{1}\over\circ\over{}\kern-1pt}
\def\En{\vtriple{\scriptstyle n}\over\circ\over{}\kern-1pt\lijntje\kern-1pt
\vtriple{\scriptstyle{n-1}}\over\circ\over{}
\cdots\cdots\vtriple{\scriptstyle 5}\over\circ\over{}\kern-1pt\lijntje\kern-1pt
\vtriple{\scriptstyle 4}\over\circ\over{\buildrel
{\scriptstyle 2}\over\vlijn}\kern-1pt\lijntje\kern-1pt
\vtriple{\scriptstyle 3}\over\circ\over{}\kern-1pt\lijntje\kern-1pt
\vtriple{\scriptstyle 1}\over\circ\over{}\kern-1pt}
\def\An{\vtriple{\scriptstyle n}\over\circ\over{}\kern-1pt\lijntje\kern-1pt
\vtriple{\scriptstyle{n-1}}\over\circ\over{}\kern-1pt\lijntje\kern-1pt
\vtriple{\scriptstyle n-2}\over\circ\over{}
\cdots\cdots
\vtriple{\scriptstyle 2}\over\circ\over{}\kern-1pt\lijntje\kern-1pt
\vtriple{\scriptstyle 1}\over\circ\over{}\kern-1pt}
\def\Cn{\vtriple{\scriptstyle n-1}\over\circ\over{}
\kern-1pt\lijntje\kern-1pt\vtriple{\scriptstyle{n-2}}\over\circ\over{}
\cdots\cdots
\vtriple{\scriptstyle 2}\over\circ\over{}
\kern-1pt\lijntje\kern-1pt\vtriple{\scriptstyle 1}\over\circ\over{}
\kern-4pt{\dlijntje \kern -25pt<}\kern10pt
\vtriple{\scriptstyle 0}\over\circ\over{}\kern-1pt}
\def\Ct{\vtriple{\scriptstyle 2}\over\circ\over{}
\kern-1pt\lijntje\kern-1pt\vtriple{\scriptstyle 1}\over\circ\over{}
\kern-4pt{\dlijntje \kern -25pt<}\kern12pt
\vtriple{\scriptstyle 0}\over\circ\over{}\kern-1pt
}
\def\Bn{\vtriple{\scriptstyle n-1}\over\circ\over{}
\kern-1pt\lijntje\kern-1pt\vtriple{\scriptstyle{n-2}}\over\circ\over{}
\cdots\cdots
\vtriple{\scriptstyle 2}\over\circ\over{}
\kern-1pt\lijntje\kern-1pt\vtriple{\scriptstyle 1}\over\circ\over{}
\kern-4pt{\dlijntje \kern -25pt>}\kern10pt
\vtriple{\scriptstyle 0}\over\circ\over{}\kern-1pt}
\def\Bt{\vtriple{\scriptstyle 2}\over\circ\over{}
\kern-1pt\lijntje\kern-1pt\vtriple{\scriptstyle 1}\over\circ\over{}
\kern-4pt{\dlijntje \kern -25pt>}\kern12pt
\vtriple{\scriptstyle 0}\over\circ\over{}\kern-1pt}
\def\Es{\vtriple{\scriptstyle 6}\over\circ\over{}\kern-1pt\lijntje\kern-1pt
\vtriple{\scriptstyle 5}\over\circ\over{}\kern-1pt\lijntje\kern-1pt
\vtriple{\scriptstyle 4}\over\circ\over{\buildrel
{\scriptstyle 2}\over\vlijn}\kern-1pt\lijntje\kern-1pt
\vtriple{3}\over\circ\over{}\kern-1pt\lijntje\kern-1pt
\vtriple{\scriptstyle 1}\over\circ\over{}\kern-1pt}
\def\Ff{
\vtriple{\scriptstyle 1}\over\circ\over{}
\kern-1pt\lijntje\kern-1pt\vtriple{\scriptstyle 2}\over\circ\over{}
\kern-4pt{\dlijntje \kern -25pt<}\kern10pt
\vtriple{\scriptstyle 3}\over\circ\over{}\kern-1pt\lijntje\kern-1pt
\vtriple{\scriptstyle 4}\over\circ\over{}
\kern-1pt}
\def\Ht{
\vtriple{\scriptstyle 1}\over\circ\over{}
\kern-1pt\overset{5}{\lijntje}\kern-1pt\vtriple{\scriptstyle 2}\over\circ\over{}
\kern-1pt\lijntje\kern-1pt
\vtriple{\scriptstyle 3}\over\circ\over{}\kern-1pt}
\def\Hf{
\vtriple{\scriptstyle 1}\over\circ\over{}
\kern-1pt\overset{5}{\lijntje}\kern-1pt\vtriple{\scriptstyle 2}\over\circ\over{}
\kern-1pt\lijntje\kern-1pt
\vtriple{\scriptstyle 3}\over\circ\over{}\kern-1pt\lijntje\kern-1pt
\vtriple{\scriptstyle 4}\over\circ\over{}
\kern-1pt}
\def\In{
\vtriple{\scriptstyle 0}\over\circ\over{}
\kern-1pt\overset{n}{\lijntje}\kern-1pt\vtriple{\scriptstyle 1}\over\circ\over{}
\kern-1pt}
\def\Gt{
\vtriple{\scriptstyle 0}\over\circ\over{}
\kern-4pt{\tlijntje\kern -25pt<}\kern 10pt\vtriple{\scriptstyle 1}\over\circ\over{}
\kern-1pt}
\def\EBn{\vtriple{\scriptstyle n-1}\over\circ\over{}
\kern-1pt\lijntje\kern-1pt\vtriple{\scriptstyle{n-2}}\over\circ\over{\buildrel
{\scriptstyle -1}\over\vlijn}\cdots\cdots
\vtriple{\scriptstyle 2}\over\circ\over{}
\kern-1pt\lijntje\kern-1pt\vtriple{\scriptstyle 1}\over\circ\over{}
\kern-4pt{\dlijntje \kern -25pt<}\kern8pt
\vtriple{\scriptstyle 0}\over\circ\over{}\kern-1pt}
\def\Cn{\vtriple{\scriptstyle n-1}\over\circ\over{}
\kern-1pt\lijntje\kern-1pt\vtriple{\scriptstyle{n-2}}\over\circ\over{}
\cdots\cdots
\vtriple{\scriptstyle 2}\over\circ\over{}
\kern-1pt\lijntje\kern-1pt\vtriple{\scriptstyle 1}\over\circ\over{}
\kern-4pt{\dlijntje \kern -25pt<}\kern10pt
\vtriple{\scriptstyle 0}\over\circ\over{}\kern-1pt}
\def\ECn{\vtriple{\scriptstyle -2}\over\circ\over{}
\kern-4pt{\dlijntje \kern -25pt>}\kern8pt\vtriple{\scriptstyle n-1}\over\circ\over{}
\kern-1pt\lijntje\kern-1pt\vtriple{\scriptstyle{n-2}}\over\circ\over{}
\cdots\cdots
\vtriple{\scriptstyle 2}\over\circ\over{}
\kern-1pt\lijntje\kern-1pt\vtriple{\scriptstyle 1}\over\circ\over{}
\kern-4pt{\dlijntje \kern -25pt<}\kern12pt
\vtriple{\scriptstyle 0}\over\circ\over{}\kern-1pt}
\def\Fo{\vtriple{\scriptstyle -1}\over\circ\over{}
\kern-1pt\lijntje\kern-1pt
\vtriple{\scriptstyle 1}\over\circ\over{}
\kern-1pt\lijntje\kern-1pt\vtriple{\scriptstyle 2}\over\circ\over{}
\kern-4pt{\dlijntje \kern -25pt<}\kern8pt
\vtriple{\scriptstyle 3}\over\circ\over{}\kern-1pt\lijntje\kern-1pt
\vtriple{\scriptstyle 4}\over\circ\over{}
\kern-1pt}
\def\Ft{
\vtriple{\scriptstyle 1}\over\circ\over{}
\kern-1pt\lijntje\kern-1pt\vtriple{\scriptstyle 2}\over\circ\over{}
\kern-4pt{\dlijntje \kern -25pt<}\kern8pt
\vtriple{\scriptstyle 3}\over\circ\over{}\kern-1pt\lijntje\kern-1pt
\vtriple{\scriptstyle 4}\over\circ\over{}
\kern-1pt\lijntje\kern-1pt
\vtriple{\scriptstyle -2}\over\circ\over{}
\kern-1pt}
\def\Go{\vtriple{\scriptstyle -1}\over\circ\over{}
\kern-1pt\lijntje\kern-1pt
\vtriple{\scriptstyle 0}\over\circ\over{}
\kern-4pt{\tlijntje\kern -25pt<}\kern 12pt\vtriple{\scriptstyle 1}\over\circ\over{}
\kern-1pt}
\def\Gf{
\vtriple{\scriptstyle 0}\over\circ\over{}
\kern-4pt{\tlijntje\kern -25pt<}\kern 12pt\vtriple{\scriptstyle 1}\over\circ\over{}
\kern-1pt\lijntje\kern-1pt
\vtriple{\scriptstyle -2}\over\circ\over{}
\kern-1pt}
\newcommand{\cA}{\mathcal{A}}
\newcommand{\N}{\mathbb N}
\newcommand{\R}{\mathbb R}
\newcommand{\Z}{\mathbb Z}
\newcommand{\fB}{\mathfrak{B}}
\numberwithin{equation}{section}
\newtheorem{lemma}{Lemma}[section]
\newtheorem{cor}[lemma]{Corollary}
\newtheorem{prop}[lemma]{Proposition}
\newtheorem{thm}[lemma]{Theorem}
\theoremstyle{definition}
\newtheorem{defn}[lemma]{Definition}
\theoremstyle{remark}
\newtheorem{rem}[lemma]{Remark}
\newtheorem{example}[lemma]{Example}
\def\b{\beta}
\def\alp{\alpha}
\begin{document}
\title{The Dieck-Temperley-Lieb algebras in Brauer algebras}
\author{ Shoumin Liu\footnote{The author is funded by Scientific Research Foundation for Returned Scholars, Ministry of Education of China(2015)
 and the National Natural Science Foundation of China (Grant No. 11601275, Youth Program).}}
 \date{}
\maketitle

\begin{abstract}
In this paper, we will study the Dieck-Temlerley-Lieb algebras of type $\ddB_n$ and $\ddC_n$. We compute their ranks and describe a
basis for them by using some  results from  corresponding Brauer algebras and Temperley-Lieb algebras.
\end{abstract}

\section{Introduction}
The Temperley-Lieb algebras play an important role in representation theory and knot theory.
The classical $TL$ algebra first came out in \cite{TL1971}, and in \cite{F1997}, Fan extended it to other types as a quotient of
Hecke algebras and described a basis for each type. Dieck have defined an  diagrammatic $TL$ algebra of type $B$ and
compute its rank  in \cite{Dieck} and \cite{Dieck2003}. Now there are some works in  $TL$ category related to $TL$ algebra, which can be
found in \cite{AM2006} and \cite{KMY2014}.   \\
The  spherical Coxeter groups are very classical and important topics in Lie theory. The irreducible spherical Coxeter groups can be
classified as simply-laced types and non-simply laced types.  Tits has described how to obtain the non-simply laced types from the
simply laced types in \cite{T1959}( also see \cite{Car}), which has been applied in many related fields.
From studying the invariant theory for orthogonal groups,
Brauer discovered  Brauer algebras of type $\ddA$(\cite{Brauer1937});  Cohen, Frenk and Wales extended it to the
definition of simply laced type in \cite{CFW2008}, including type $\ddD_n$. In \cite{CLY2010} and \cite{CL2011}, Cohen, Liu and
Yu applied the similar method to obtain the Brauer algebras of type $\ddB_n$ and $\ddC_n$. In \cite{CLY2010} and \cite{CL2011},
the Brauer algebras of type $\ddC_n$ and $\ddB_n$ are described as the subalgebras of Brauer algebras of type $\ddA_{2n-1}$ and
$\ddD_{n+1}$ spanned by the submonoids invariant under the classical Dynkin diagram automorphisms, repectively.\\
In this paper, we will apply  the same method on $TL$ algebras of type $\ddA_{2n-1}$ and
$\ddD_{n+1}$ to obtain the  $TL$ algebras of type $\ddC_n$ and $\ddB_n$. And our $TL$ algebras of type $\ddC_n$
coincide with the $TL$ algebras of type $\ddB_n$ given by Dieck. Therefore we call our $TL$ algebras Dieck-Temperley-Lieb algebras ($\RTL$ in short).\\
In this paper, we first recall some results about Brauer algebras of simply-laced types and obtain some results
about the height functions under the diagram automorphisms.
We apply these to the $TL$ algebras of simply-laced types which can be considered as subalgebras
of corresponding Brauer algebras. We prove the isomorphism  between $\RTL(\ddC_n)$ and a subalgebra of $\TL(\ddA_{2n-1})$.
We describe a rewriting  forms for  $\RTL(\ddB_n)$
 and compute the rank of  $\RTL(\ddB_n)$ being $C_n+C_{n+1}-1$, where $C_n$ and $C_{n+1}$ are Catalan numbers.\\

  The paper is sketched as the following. In Section \ref{sect:BrSL},
  we recall the definition of Brauer algebras of simply-laced types and the classical
  Brauer algebras. In Section \ref{sect:Height}, we prove that the classical diagram automorphisms to obtain
  non-simply laced Brauer algebras from simply-laced Brauer algebras are height-invariant automorphisms.
  In Section \ref{sect:DTL}, we present the definition of Dieck-Temperley-Lieb algebras, and show some algebra homomorphisms betwwen the
  $\RTL$ algebras and some corresponding Brauer algebras. We recall some notions and  results about Brauer algebras
  of simply-laced type, such as admissible root sets, Brauer monoid actions and rewriting forms in Section \ref{sect:concls}. In Section
  \ref{sect:IsoCn}, we use a combinatorial method to prove  the isomorphism of $\RTL(\ddC_n)$ and $\STL(\ddA_{2n-1})$.
  In Section \ref{sect:forms}, we present a rewriting forms for $\RTL(\ddB_n)$. In Section \ref{sect:rank}, we recall  the diagram representations for
  Brauer algebras of type $\ddD_n$, and show an diagram representation for $\RTL(\ddB_n)$ to prove that the rewriting forms  in
  Section \ref{sect:forms} are linearly independent and $\RTL(\ddB_n)$ has rank $C_n+C_{n+1}-1$.

\section{The Brauer algebras of simply-laced types}\label{sect:BrSL}

First we recall the definition of simply-laced Brauer algebra from \cite{CFW2008}.
In order to avoid confusion
with the  generators in Section \ref{sect:DTL}, the symbols of \cite{CFW2008} have been capitalized.

\begin{defn}\label{1.1}
Let $Q$ be a graph. The Brauer monoid $\BrM(Q)$ is the monoid
generated by the symbols $R_i$ and $E_i$, for  each node $i$ of $Q$ and $\delta$,
$\delta^{-1}$ subject to the following relation, where
$\sim$ denotes adjacency between nodes of $Q$.

\begin{equation}\delta\delta^{-1}=1     \label{1.1.1}
\end{equation}
\begin{equation}R_{i}^{2}=1          \label{1.1.2}
\end{equation}
\begin{equation}R_iE_i=E_iR_i=E_i     \label{1.1.3}
\end{equation}
\begin{equation}E_{i}^{2}=\delta E_{i}   \label{1.1.4}
\end{equation}
\begin{equation}R_iR_j=R_jR_i, \,\, \mbox{for}\, \it{i\nsim j} \label{1.1.5}
\end{equation}
\begin{equation}E_iR_j=R_jE_i,\,\, \mbox{for}\, \it{i\nsim j}  \label{1.1.6}
\end{equation}
\begin{equation}E_iE_j=E_jE_i,\,\, \mbox{for}\, \it{i\nsim j}    \label{1.1.7}
\end{equation}
\begin{equation}R_iR_jR_i=R_jR_iR_j, \,\, \mbox{for}\, \it{i\sim j}  \label{1.1.8}
\end{equation}
\begin{equation}R_jR_iE_j=E_iE_j ,\,\, \mbox{for}\, \it{i\sim j}       \label{1.1.9}
\end{equation}
\begin{equation}R_iE_jR_i=R_jE_iR_j ,\,\, \mbox{for}\, \it{i\sim j}     \label{1.1.10}
\end{equation}
The Brauer algebra $\Br(Q)$ is  the the free $\Z$-algebra for Brauer monoid  $\BrM(Q)$.
 \end{defn}
 The Brauer algebras $\Br(Q)$ has been well studied in \cite{CFW2008}, where the basis and ranks
 of finite types are given. Usually we call $R_i$s Coxeter generators, and  $E_i$s  Temperley-Lieb generators.

 \begin{rem} From \cite{CFW2008},
we know that the following relations
in $\Br(\ddA_m)$ hold for $i\sim j \sim k$.
\begin{eqnarray}
E_iR_jR_i&=&E_iE_j \label{3.1.1}
\\
R_jE_iE_j &=& R_i E_j  \label{3.1.2}
\\
E_iR_jE_i &=& E_i       \label{3.1.3}
\\
E_jE_iR_j &=& E_j R_i     \label{3.1.4}
\\
E_iE_jE_i &=& E_i      \label{3.1.5}
\\
E_jE_iR_k E_j &=& E_jR_iE_k E_j      \label{3.1.6}
\\
E_jR_iR_k E_j &=& E_jE_iE_k E_j           \label{3.1.7}
\end{eqnarray}
\end{rem}
\begin{rem}
In \cite{Brauer1937}, Brauer shows a diagram description for a basis of
Brauer monoid of type $\ddA_m$, which is just a diagram monoid with $2m+2$
dots and $m+1$ strands, and for each dot, there is a unique strand
connecting it with another dot. Here we suppose the $2m+2$ dots have coordinates
$(i, 0)$ and $(i,1)$ in $\R^{2}$ with $1\leq i\leq m+1$. The multiplication
of two diagrams is given by concatenation, where any closed loops formed are
replaced by a factor of $\delta$, and we give one example in Figure \ref{Fig:mult}.
The generators of $\BrM(\ddA_m)$ of the
form $R_i$ and $E_i$ correspond to the diagrams indicated in Figure
\ref{fig:gens_brauerA}. Each Brauer diagram can be written as a product of elements from
$\{R_i,E_i\}_{i=1}^m$. Henceforth, we identify $\BrM(\ddA_{m})$ with its diagrammatic version. It
makes clear that $\Br(\ddA_m)$ is a free algebra over $\Z[\delta^{\pm1}]$ of
rank $(m+1)!!$, the product of the first $m+1$ odd integers. The monomials
of $\BrM(\ddA_m)$ that correspond to diagrams
will be referred to as diagrams. 
\begin{figure}[!htb]
\begin{center}
\includegraphics[width=.7\textwidth,height=.3\textheight]{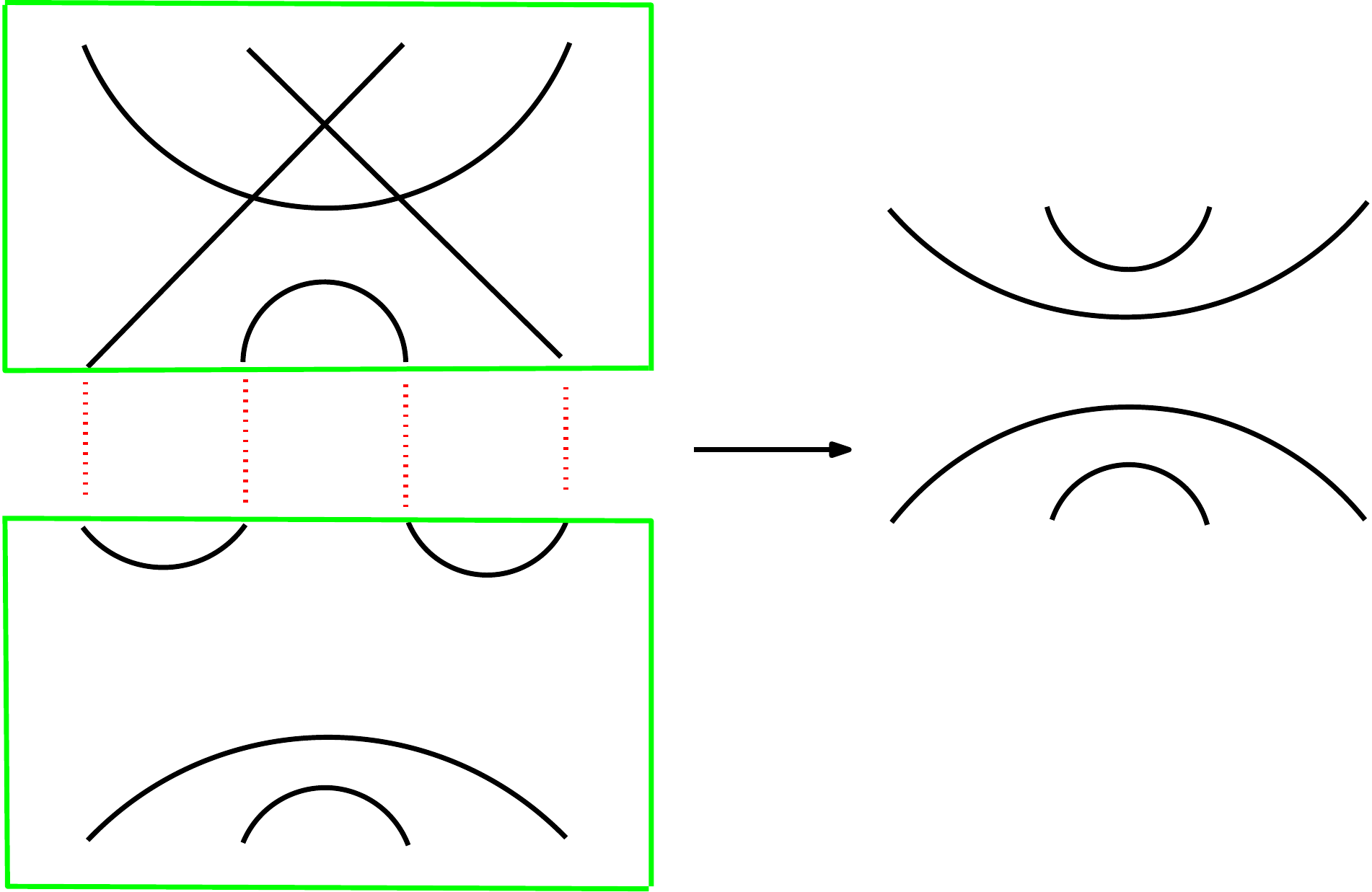}
\end{center}
\caption{One multiplication example}
\label{Fig:mult}
\end{figure}
\begin{figure}[!htb]
\begin{center}
\includegraphics[width=.7\textwidth,height=.3\textheight]{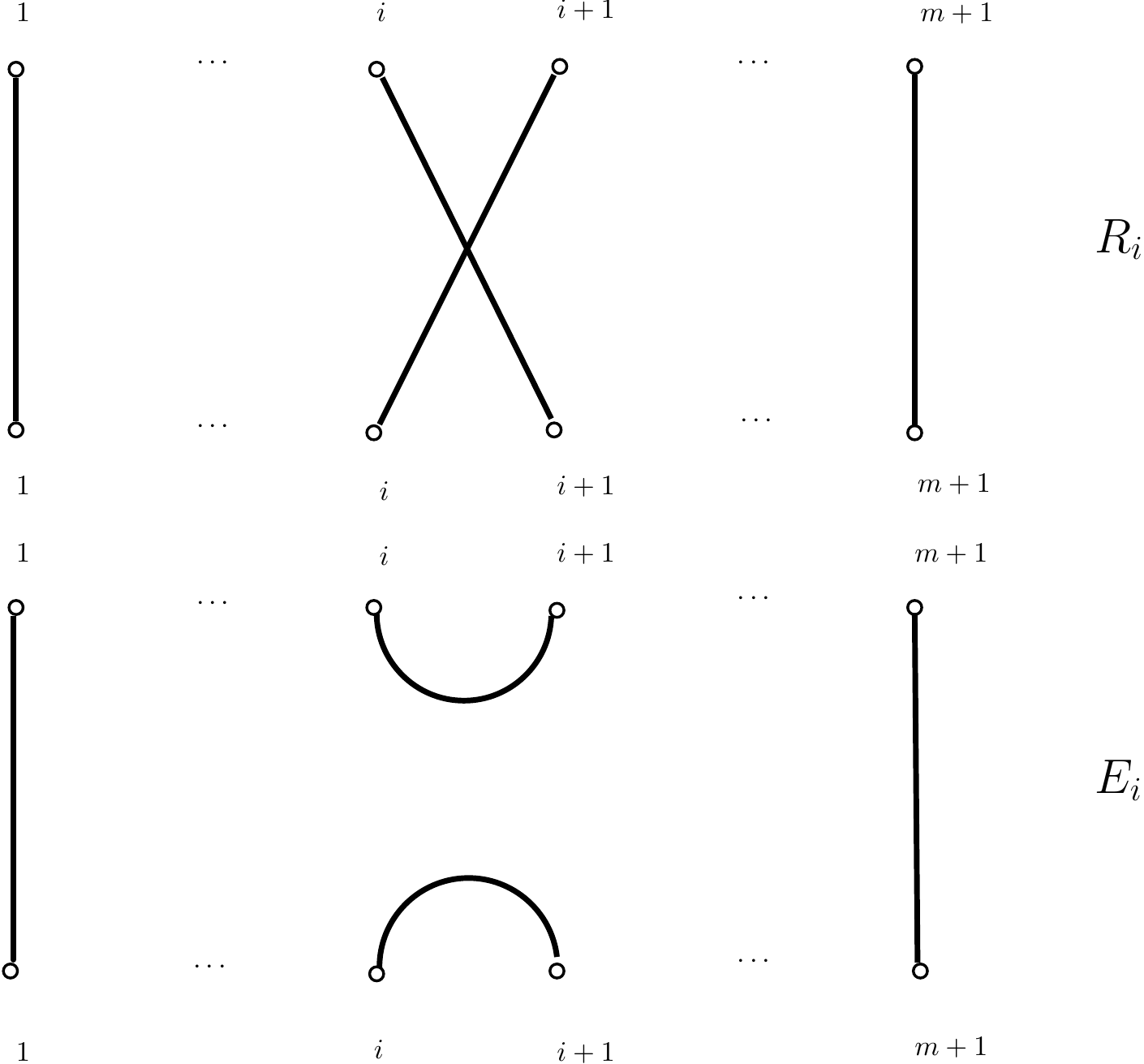}
\end{center}
\caption{Brauer diagrams corresponding to $R_i$ and $E_i$, respectively.}
\label{fig:gens_brauerA}
\end{figure}
\end{rem}

\section{Height and automorphisms}\label{sect:Height}
\begin{rem}
We keep  notation as  in \cite[Section 2]{CW2011} and first introduce some basic conceptions.
Let $Q$ be  the  diagram  of a  connected finite simply laced Coxeter
group (type $\ddA_n$, $\ddD_n$, $\ddE_6$,  $\ddE_7$,  $\ddE_8$), and
$\BrM(Q)$ is the associated Brauer monoid as Definition \ref{1.1}.
An element  $a\in \BrM(Q)$ is said to be of \emph{height} $t$ if the minimal number of
 $R_i$  occurring in an expression of $a$ is $t$, denoted by $\rm{ht}$$(a)$.
 \end{rem}
 \begin{prop}\label{prop:heightinv}
 Let $Q$ be  the  diagram  of a  connected finite simply laced Coxeter
group  and $\BrM(Q)$ is the associated Brauer monoid.
 Let $\sigma$ be
 an automorphism on  $\BrM(Q)$, which is induced by permutation on Coxeter generators $R_i$ and
 Temperley-Lieb generators $E_i$, respectively. Then for each monomial $a\in \BrM(Q)$,
 it follws that
 $$\rm{ht(a)}=\rm{ht(\sigma(a))}.$$
\end{prop}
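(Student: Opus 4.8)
The plan is to show that the defining relations \eqref{1.1.1}--\eqref{1.1.10} are \emph{symmetric in the number of $R_i$'s occurring on each side}, in the sense that each relation $u=v$ (with $u,v$ words in the generators) either has the same number of letters from $\{R_i\}_i$ on both sides, or is one of a handful of exceptional relations whose behaviour under $\sigma$ we can control directly. Since $\sigma$ permutes the $R_i$ among themselves and the $E_i$ among themselves (and fixes $\delta^{\pm1}$), it sends any word $w$ in the generators to a word $\sigma(w)$ with exactly the same number of $R$-letters and $E$-letters as $w$. So the real content is: the function $\mathrm{ht}$ on $\BrM(Q)$, defined as the minimum over all expressions of the number of $R$-letters used, is \emph{already determined by the monoid structure} in a way that $\sigma$ respects, because $\sigma$ is an automorphism.

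First I would make the following clean observation. Let $w$ be any word realising $\mathrm{ht}(a)=t$, i.e.\ $w$ represents $a$ and uses exactly $t$ letters from $\{R_i\}$. Apply $\sigma$ letterwise: $\sigma(w)$ is a word representing $\sigma(a)$ and using exactly $t$ letters from $\{R_i\}$ (since $\sigma$ permutes the $R_i$'s). Hence $\mathrm{ht}(\sigma(a)) \le t = \mathrm{ht}(a)$. This is the key step and it is essentially immediate from the definition of $\mathrm{ht}$ together with the fact that $\sigma$ is defined on generators by separately permuting the two families $\{R_i\}$ and $\{E_i\}$. Applying the same argument to $\sigma^{-1}$ (which is again an automorphism of the same type, since inverses of such permutation-induced automorphisms permute $R_i$'s among $R_i$'s and $E_i$'s among $E_i$'s) gives $\mathrm{ht}(a) = \mathrm{ht}(\sigma^{-1}(\sigma(a))) \le \mathrm{ht}(\sigma(a))$. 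Combining the two inequalities yields $\mathrm{ht}(a)=\mathrm{ht}(\sigma(a))$.

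The point that needs a word of care — and where I expect the only genuine subtlety to lie — is the claim that $\sigma(w)$ is a \emph{well-defined} element of $\BrM(Q)$ representing $\sigma(a)$, i.e.\ that ``apply $\sigma$ letterwise to a word'' is compatible with passing to the quotient by the relations. This is exactly the statement that $\sigma$ is a monoid \emph{endomorphism} (indeed automorphism), which is part of the hypothesis: $\sigma$ being an automorphism of $\BrM(Q)$ induced by a permutation of the generators means precisely that the assignment $R_i\mapsto R_{\pi(i)}$, $E_i\mapsto E_{\rho(i)}$ (for suitable permutations $\pi,\rho$ of the node set) extends to a monoid homomorphism — equivalently, that this substitution carries each relation in Definition~\ref{1.1} to a consequence of the relations. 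Given that, the letterwise image of a word depends only on the element it represents, so $w\mapsto\sigma(w)$ descends, and the counting argument above is valid. I would therefore structure the write-up as: (1) recall that $\sigma$ permutes $\{R_i\}$ and $\{E_i\}$ separately and hence preserves, for any word, the count of $R$-letters; (2) deduce $\mathrm{ht}(\sigma(a))\le\mathrm{ht}(a)$ from a minimal expression; (3) note $\sigma^{-1}$ is of the same form and apply (2) to it to get the reverse inequality; (4) conclude. No case analysis on the Coxeter type is actually needed — the statement holds for any $Q$ for which such a $\sigma$ exists — but since the intended applications are the diagram automorphisms of $\ddA_{2n-1}$ and $\ddD_{n+1}$, one may remark afterwards that those particular automorphisms are of the required form.
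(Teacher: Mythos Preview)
Your proposal is correct and follows essentially the same approach as the paper's own proof: take a minimal expression for $a$ with $t$ Coxeter generators, apply $\sigma$ letterwise to obtain an expression for $\sigma(a)$ with $t$ Coxeter generators, deduce $\het(\sigma(a))\le\het(a)$, and then use that $\sigma$ is an automorphism (equivalently, apply the same argument to $\sigma^{-1}$) to get the reverse inequality. The paper states this in two lines; your additional remarks on why applying $\sigma$ letterwise is legitimate are correct but not needed once $\sigma$ is assumed to be a monoid automorphism.
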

\begin{proof} Let $a\in \BrM(Q)$  and $t=\rm{ht}$$(a)$.
So $a$ has a reduced word which exactly has $t$  Coxeter generators $R_{i_1}$,$\dots$, $R_{i_t}$. Therefore  it follows that
$\sigma(a)$ can be reduced to a word with exactly  $t$  Coxeter generators $\sigma(R_{i_1})$,$\dots$, $\sigma(R_{i_t})$.
Therefore $\rm{ht(a)}\geq \rm{ht(\sigma(a))}$. Because  $\sigma$ is an automorphism, we can obtain that
$\rm{ht(a)}\leq \rm{ht(\sigma(a))}$. Hence $\rm{ht(a)}=\rm{ht(\sigma(a))}$.

\end{proof}
\begin{rem}\label{rem:all}
In classical finite Weyl groups, we can define  automorphisms on Dynkin diagrams of simply-laced Weyl
groups to obtain the Weyl groups of non-simply laced types listed in Figure \ref{ABCD}, and we have already applied these automorphisms on simply-laced Brauer
algebras to define and study Brauer algebras of non-simply laced types, which can be found in \cite{CLY2010} for type $\ddC_n$ from
$\ddA_{2n-1}$, \cite{CL2011}  for type $\ddB_n$ from
$\ddD_{n+1}$,
\cite{L2013} for type $\ddF_4$ from
$\ddE_6$, \cite{L2015} for type $\ddG_2$ from
$\ddD_4$. These conclusions are contained in \cite{Lthesis} for completing the project of obtaining  Brauer algebras of non-simply laced type
from simply-laced types.
\end{rem}
Let $M$ and $Q$ be the non-simply laced type and simply-laced type, respectively, from the above remark, and we list them  in  the table below and the diagram automorphisms.\\
\begin{center}
\begin{tabular}{|c|c|}
  \hline
  $M$ & $Q$ \\
  \hline
  $\ddC_n$ & $\ddA_{2n-1}$ \\
  \hline
  $\ddB_n$ &$\ddD_{n+1}$ \\
  \hline
  $\ddF_4$ & $\ddE_6$ \\
  \hline
  $\ddG_2$ & $\ddD_4$ \\
  \hline
\end{tabular}
\end{center}
\begin{figure}[!htb]
\begin{center}
\includegraphics[width=.7\textwidth,height=.3\textheight]{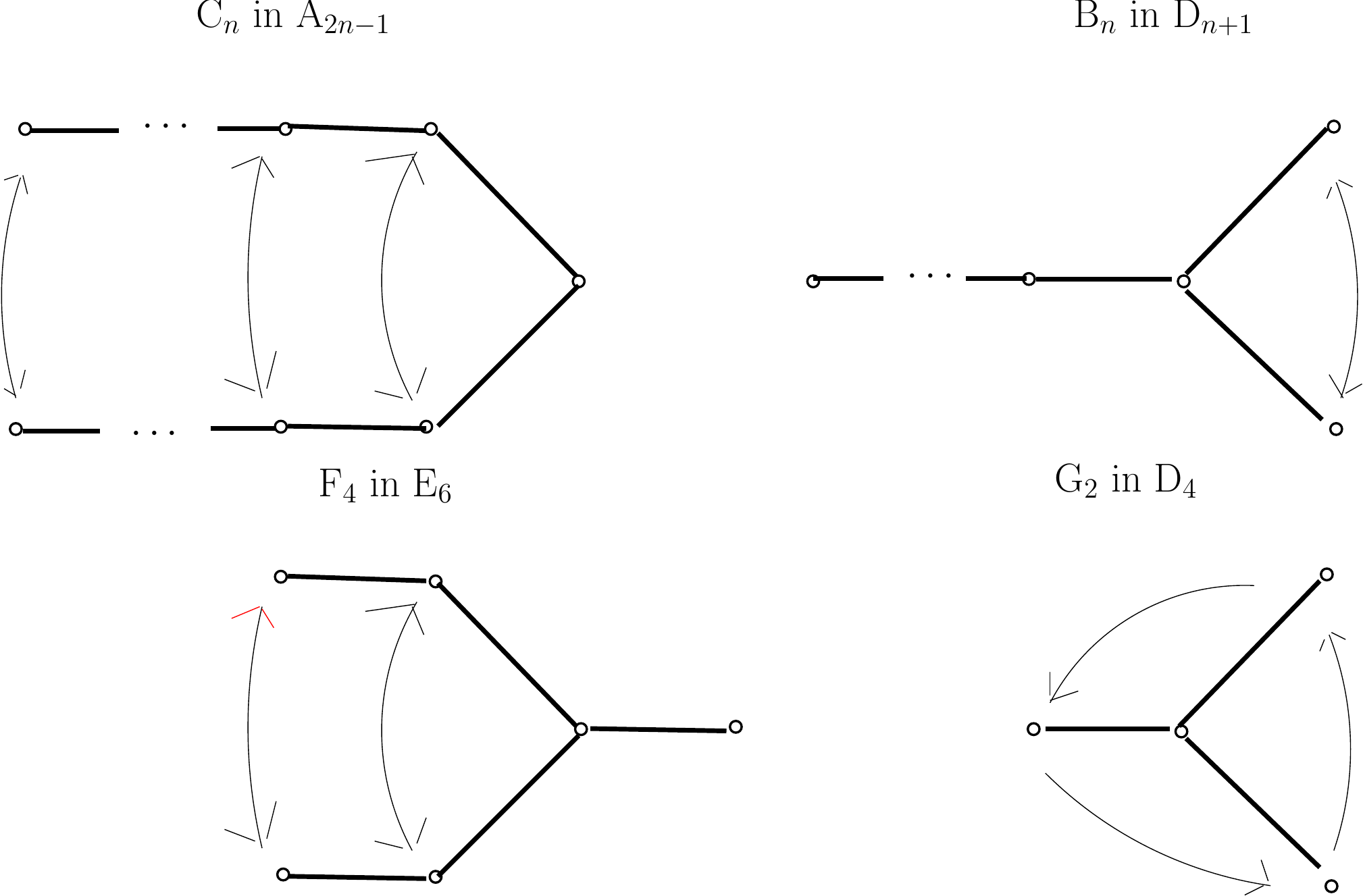}
\end{center}
\caption{The diagram automorphisms}
\label{ABCD}
\end{figure}
In the \cite{Lthesis}, to obtain the Brauer algebra of type $M$($\Br(M)$) from Brauer algebra of type $Q$
($\Br(Q)$), we always define an automorphism $\sigma$ on $\Br(Q)$ which just extends the classical automorphism on
Weyl groups on to the Temperley-Lieb generators $E_i$s, and implies that $\sigma(E_i)=E_j$ if $\sigma(R_i)=R_j$.
By Proposition \ref{prop:heightinv}, the following holds.
\begin{cor} The automorphism $\sigma$ on $\Br(Q)$ in  Remark \ref{rem:all} is a height invariant automorphism.
\end{cor}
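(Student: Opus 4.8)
The plan is to reduce the statement directly to Proposition~\ref{prop:heightinv}. First I would make explicit that the map $\sigma$ appearing in Remark~\ref{rem:all} is, by its very construction, an automorphism of $\BrM(Q)$ induced by a permutation of the nodes of $Q$: the classical Dynkin diagram automorphism permutes the vertices of $Q$ while preserving adjacency, so for the corresponding permutation $\tau$ of the index set it sends $R_i \mapsto R_{\tau(i)}$ and, by the prescription recalled just before the corollary, $E_i \mapsto E_{\tau(i)}$. One should check that this assignment respects the defining relations \eqref{1.1.1}--\eqref{1.1.10}: each of these is phrased purely in terms of the adjacency ($i\sim j$) or non-adjacency ($i\nsim j$) of the nodes involved, together with $\delta,\delta^{-1}$, which $\sigma$ fixes; since a diagram automorphism preserves both $\sim$ and $\nsim$, all the relations are carried to relations of the same form. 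Hence $\sigma$ is a well-defined monoid automorphism of $\BrM(Q)$, extending $\Z$-linearly to an algebra automorphism of $\Br(Q)$.

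Next I would invoke Proposition~\ref{prop:heightinv} verbatim. Since $\sigma$ is an automorphism of $\BrM(Q)$ induced by a permutation acting compatibly on the Coxeter generators $R_i$ and on the Temperley-Lieb generators $E_i$, that proposition applies and yields $\mathrm{ht}(a) = \mathrm{ht}(\sigma(a))$ for every monomial $a \in \BrM(Q)$.

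Finally, because $\Br(Q)$ is the free $\Z$-algebra on the monoid $\BrM(Q)$ and the height function is defined on the monomials (the diagrams), the equality of heights over all monomials is precisely the assertion that $\sigma$ is a height-invariant automorphism of $\Br(Q)$. I do not expect any genuine obstacle here: the only point deserving a sentence is the well-definedness of $\sigma$ as an algebra automorphism, which, as noted, follows from the fact that the relations of Definition~\ref{1.1} depend only on the adjacency graph $Q$ and are therefore invariant under any graph automorphism of $Q$; once that is in place the corollary is an immediate specialization of Proposition~\ref{prop:heightinv}.
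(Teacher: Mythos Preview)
Your proposal is correct and follows essentially the same approach as the paper: the paper simply states that the corollary holds by Proposition~\ref{prop:heightinv}, and your argument is just a more detailed unpacking of why $\sigma$ satisfies the hypotheses of that proposition. The extra verification that $\sigma$ is a well-defined automorphism (via preservation of the relations \eqref{1.1.1}--\eqref{1.1.10} under a graph automorphism) is a reasonable elaboration but not a different route.
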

\section{The  Dieck-Temperley-Lieb algebras}\label{sect:DTL}
Let $M$ be a connected double laced or simply laced Dynkin diagram of  finite type, namely type $\ddA_n$, $\ddB_n$, $\ddC_n$,
$\ddD_n$, $\ddE_n$($n=6$, $7$, $8$), $\ddF_4$.
We  list their Dynkin  diagrams in
 Table \ref{DKdiagram}.
 \begin{table}[!htb]
\caption{Coxeter diagrams of spherical types}\label{DKdiagram}
\begin{center}
\begin{tabular}{c|c}
type&diagram\\
\hline
$\ddA_n$&$\An$\\
$\ddD_n$&$\Dn$\\
$\ddE_n$, $6\le n\le 8$&$\En$\\
$\ddB_n$&$\Bn$\\
$\ddC_n$&$\Cn$\\
$\ddF_4$&$\Ff$
\end{tabular}
\end{center}
\end{table}
\begin{defn} \label{0.1}
Let $R$ be a commutative ring with invertible element
$\delta$.  For $n\in \N$, the \emph{reduced Temperley-Lieb algebra of type $M$} over $R$
with loop parameter $\delta$, denoted by $\RTL(M,R,\delta)$, is the
$R$-algebra generated by $\{e_i\}_{i\in M}$ subject to the following relations.
For each $i\in M$,
\begin{eqnarray}
e_{i}^{2}&=&\delta^{\kappa_i}e_{i};\label{1.1}
\end{eqnarray}
for $i$, $j\in M$  not adjacent to each other, namely $\vtriple{\scriptstyle i}\over\circ\over{}\kern-1pt\quad\kern-1pt
\vtriple{\scriptstyle{j}}\over\circ\over{}\kern-1pt$ ,
\begin{eqnarray}
e_ie_j&=&e_je_i;    \label{1.2}
\end{eqnarray}
for $i$, $j\in M$  and $\vtriple{\scriptstyle i}\over\circ\over{}\kern-1pt\lijntje\kern-1pt
\vtriple{\scriptstyle{j}}\over\circ\over{}\kern-1pt$ ,
\begin{eqnarray}
e_ie_je_i&=&e_i;         \label{1.3}
\end{eqnarray}
for $i$, $j\in M$ and $\vtriple{\scriptstyle i}\over\circ\over{}
\kern-4pt{\dlijntje \kern -25pt>}\kern8pt
\vtriple{\scriptstyle j}\over\circ\over{}\kern-1pt$ ,
\begin{eqnarray}
e_{j}e_i e_{j}&=&\delta e_{j}.                                                  \label{1.4}
\end{eqnarray}
The parameter $\kappa_i\in \N$ is given  below, \\
for type  $\ddA_n$, $\ddD_n$, $\ddE_n$, $\kappa_i=1$ for $1\le i\le n$,\\
for type $\ddC_n$, $\kappa_0=1$, $\kappa_i=2$ for $1\le i\le n-1$;\\
for type $\ddB_n$, $\kappa_0=2$, $\kappa_i=1$ for $1\le i\le n-1$;\\
for type $\ddF_4$, $\kappa_1=\kappa_2=2$, $\kappa_3=\kappa_4=1$;\\
If $R = \Z[\delta^{\pm1}]$ we write
$\RTL(M)$ instead of $\RTL(M,R,\delta)$ and speak of  the
Dieck-Temperley-Lieb  algebra of type $M$.  The submonoid of the multiplicative
monoid of $\RTL(M)$ generated by $\delta$, $\delta^{-1}$ and $\{e_i\}_{i\in M}$
 is denoted by
$\RTLM(M)$. It is the monoid of monomials in $\RTL(M)$ and will be
called the Dieck-Temperley-Lieb monoid of type $M$.
\end{defn}
\begin{rem} When $Q$ is of simply laced type, it can be seen that $\RTL(Q)$ is the classical Temperley-Lieb algebra of type $Q$,
then we denote it by $\TL(Q)$ to replace   $\RTL(Q)$. Similarly, we replace $\RTLM(Q)$ by $\TLM(Q)$.  When $Q$ is of type $\ddA_n$, $\ddD_n$, $\ddE_6$, $\ddE_7$ and $\ddE_8$, the
algebra $\TL(Q)$ is a subalgebra of $\Br(Q)$, and the monomials of height $0$ form a basis of  $\TL(Q)$(\cite{CW2011}).
\end{rem}
Similar to the case for Brauer algebras,
there is a natural anti-involution on $\RTL(M,R,\delta)$  linearly induced by
$$x_1 x_2\ldots x_n\mapsto x_n\ldots x_2 x_1$$
with each $x_i$ being the generator of $\RTL(M,R,\delta)$.

\begin{prop} \label{prop:opp}
The identity map on
$\{\delta,  e_i \mid i\in M\}$ extends to a unique
anti-involution on the algebra $\RTL(M,R,\delta)$.
\end{prop}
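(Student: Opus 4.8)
The plan is to verify that the anti-automorphism is well-defined by checking that it respects every defining relation of $\RTL(M,R,\delta)$, and then to observe that an anti-homomorphism which is an involution on generators is an anti-involution on the whole algebra. First I would define the map $\ast$ on the free $R$-algebra on the symbols $\{e_i\}_{i\in M}$ (together with $\delta^{\pm 1}$, which are central) by reversing words, $(x_1 x_2\cdots x_n)^\ast = x_n\cdots x_2 x_1$, and extending $R$-linearly; this is the standard opposite-algebra construction, so on the free algebra it is automatically a well-defined anti-homomorphism and clearly satisfies $(\ast)^2=\id$ on generators, hence everywhere on the free algebra.

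Next I would show $\ast$ descends to the quotient $\RTL(M,R,\delta)$, i.e.\ that the two-sided ideal $I$ of relations is stable under $\ast$. It suffices to check that the image under $\ast$ of each relator lies in $I$. Relation \eqref{1.1}, $e_i^2=\delta^{\kappa_i}e_i$, is symmetric under word reversal since $e_i^2$ and $\delta^{\kappa_i}e_i$ are each fixed by $\ast$. Relation \eqref{1.2}, $e_ie_j=e_je_i$ for $i\not\sim j$, maps under $\ast$ to $e_je_i=e_ie_j$, which is the same relation. Relation \eqref{1.3}, $e_ie_je_i=e_i$ for $i\sim j$, is again fixed term-by-term by $\ast$ (the left side reverses to itself, the right side is a single generator). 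Relation \eqref{1.4}, $e_je_ie_j=\delta e_j$ for the arrow $i\Rightarrow j$, likewise reverses to $e_je_ie_j=\delta e_j$, itself. Hence every defining relator is sent by $\ast$ into $I$, so $\ast(I)\subseteq I$; applying $\ast$ once more (using $\ast^2=\id$) gives $I\subseteq\ast(I)$, so $\ast(I)=I$ and $\ast$ induces an anti-homomorphism $\RTL(M,R,\delta)\to\RTL(M,R,\delta)$.

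Finally I would note that the induced map on $\RTL(M,R,\delta)$ is still an involution: it fixes each $e_i$ and each $\delta^{\pm1}$, so it is the identity on a generating set; since it is an anti-homomorphism, $\ast^2$ is a homomorphism fixing the generators, hence $\ast^2=\id$ on all of $\RTL(M,R,\delta)$. Uniqueness is immediate: any anti-involution fixing $\{\delta, e_i\mid i\in M\}$ must act on an arbitrary monomial $e_{i_1}\cdots e_{i_k}$ by reversal, and these monomials span the algebra.

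Honestly there is no real obstacle here: this is a bookkeeping argument, and the only thing one must be slightly careful about is that each of the four relation types in Definition \ref{0.1} is ``palindromic'' in an appropriate sense — i.e.\ reversing the word on each side of every relation returns a relation already in the list (in fact the very same one). The mild point worth stating explicitly is that relations \eqref{1.2} and the braid-free form of \eqref{1.3}, \eqref{1.4} are each individually self-dual under reversal, which is what makes $\ast(I)\subseteq I$ hold without needing to combine relations; I would spell this out relation by relation but not belabor it.
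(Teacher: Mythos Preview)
Your argument is correct. The paper does not actually supply a proof of this proposition: it merely records the reversal formula $x_1x_2\cdots x_n\mapsto x_n\cdots x_2x_1$ and states the result, treating it as routine. Your write-up is exactly the standard verification that the paper leaves implicit, namely checking that each defining relation \eqref{1.1}--\eqref{1.4} is palindromic so that the ideal of relations is $\ast$-stable.
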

Let's recall the definition of  Brauer algebras of double-laced types in the following from \cite{L2015}.
\begin{defn} \label{0.2}
Let $R$ be a commutative ring with invertible element
$\delta$ and $M$ be a Dynkin diagram of Weyl type.  For $n\in \N$, the \emph{Brauer algebra of type $M$} over $R$
with loop parameter $\delta$, denoted by $\Br(M,R,\delta)$, is the
$R$-algebra generated by $\{r_i,e_i\}_{i\in M}$ subject to the following relations.
For each $i\in M$,
\begin{eqnarray}
r_{i}^{2}&=&1,    \label{0.1.3} \label{c3.0.1.3}\label{c4.0.1.3} \label{c7.0.1.3}\label{c6.0.1.3}
\\
r_ie_i &= & e_ir_i \,=\, e_i,  \label{0.1.4} \label{c3.0.1.4} \label{c4.0.1.4}\label{c7.0.1.4}\label{c6.0.1.4}
\\
e_{i}^{2}&=&\delta^{\kappa_i}e_{i};\label{0.1.5}\label{0.1.6}\label{c6.0.1.5}\label{c3.0.1.5}\label{c3.0.1.6}\label{c4.0.1.5}\label{c4.0.1.6}\label{c7.0.1.5}\label{c7.0.1.6}
\end{eqnarray}
for $i$, $j\in M$  not adjacent to each other, namely $\vtriple{\scriptstyle i}\over\circ\over{}\kern-1pt\quad\kern-1pt
\vtriple{\scriptstyle{j}}\over\circ\over{}\kern-1pt$ ,
\begin{eqnarray}
r_ir_j&=&r_jr_i,   \label{0.1.7}\label{c3.0.1.7}\label{c4.0.1.7}\label{c6.0.1.6}
\\
e_ir_j&=&r_je_i,     \label{0.1.8}\label{c3.0.1.8}\label{c4.0.1.8}\label{c6.0.1.7}
\\
e_ie_j&=&e_je_i;    \label{0.1.9} \label{c3.0.1.9}\label{c4.0.1.9}  \label{c6.0.1.8}
\end{eqnarray}
for $i$, $j\in M$  and $\vtriple{\scriptstyle i}\over\circ\over{}\kern-1pt\lijntje\kern-1pt
\vtriple{\scriptstyle{j}}\over\circ\over{}\kern-1pt$ ,
\begin{eqnarray}
r_ir_jr_i&=&r_jr_ir_j,  \label{0.1.10}\label{c3.0.1.10} \label{c4.0.1.10} \label{c6.0.1.9}
\\
r_jr_ie_j&=&e_ie_j ,             \label{0.1.13}\label{c3.0.1.13} \label{c4.0.1.11}\label{c6.0.1.10}
\\
r_ie_jr_i&=&r_je_ir_j;         \label{0.1.15} \label{c3.0.1.15} \label{c4.0.1.12}\label{c6.0.1.11}
\end{eqnarray}
for $i$, $j\in M$ and $\vtriple{\scriptstyle i}\over\circ\over{}
\kern-4pt{\dlijntje \kern -25pt>}\kern12pt
\vtriple{\scriptstyle j}\over\circ\over{}\kern-1pt$ ,
\begin{eqnarray}
r_{j}r_ir_{j}r_{i}&=&r_ir_{j}r_ir_{j},             \label{0.1.11} \label{c3.0.1.11}\label{c4.0.1.13}
 \\
r_{j}r_ie_{j}&=&r_ie_{j},                               \label{0.1.14}\label{c3.0.1.14} \label{c4.0.1.14}
\\
  r_{j}e_ir_{j}e_i&=&e_ie_{j}e_i,                                                        \label{0.1.19} \label{c3.0.1.19}\label{c4.0.1.15}
\\
(r_{j}r_ir_{j})e_i&=&e_i(r_{j}r_ir_{j}),                                                            \label{0.1.20}\label{c3.0.1.20}\label{c4.0.1.16}
\\
e_{j}r_{i}e_{j}&=&\delta e_{j},                                                \label{0.1.12}\label{c3.0.1.12} \label{c4.0.1.17}
\\
e_{j}e_ie_{j}&=&\delta e_{j}   ,                                                 \label{0.1.16}\label{c3.0.1.16}\label{c4.0.1.18}
\\
e_{j}r_i r_{j}&=&e_{j}r_{i}     ,                                                     \label{0.1.17} \label{c3.0.1.17} \label{c4.0.1.19}
\\
 e_{j}e_ir_{j}&=&e_{j}e_i        ;                                                    \label{0.1.18} \label{c3.0.1.18}\label{c4.0.1.20}
\end{eqnarray}
The parameter $\kappa_i\in \N$ is the same as Definition \ref{0.1}.  \\
If $R = \Z[\delta^{\pm1}]$ we write
$\Br(M)$ instead of $\Br(M,R,\delta)$ and speak of  the
Brauer algebra of type $M$.  The submonoid of the multiplicative
monoid of $\Br(M)$ generated by $\delta$, $\delta^{-1}$ and $\{r_i,e_i\}_{i\in M}$
 is denoted by
$\BrM(M)$. It is the monoid of monomials in $\Br(M)$ and will be
called the Brauer monoid of type $M$.
\end{defn}
By easy verification and  \cite[Table 1]{L2015}, we can obtain the conclusion below.
\begin{thm}\label{4imbeddings}
There is an algebra homomorphism $\phi: \RTL(M)\rightarrow \Br(M)$ by mapping the generators $e_i$ of
 $\RTL(M)$ to  the generators $e_i$ of
 $\Br(M)$. Furthermore we have the commutative diagram  below.
 \begin{center}
\phantom{longgggg}
\quad
\xymatrix{
 &\Br(M)\ar[rd]&\\
  \RTL(M)\ar[ru]\ar[rd] &&\Br(Q)\\
    &\TL(Q)\ar[ru]&      }
\end{center}
\end{thm}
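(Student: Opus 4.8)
The plan is to build the three non-trivial arrows of the square and check that it commutes on generators. Since for simply laced $M$ everything degenerates to the inclusion $\TL(M)\subseteq\Br(M)$ recorded in the Remark after Definition~\ref{0.1} (with $Q=M$), I may assume $M$ is genuinely double laced, so that $(M,Q)$ is one of $(\ddC_n,\ddA_{2n-1})$, $(\ddB_n,\ddD_{n+1})$, $(\ddF_4,\ddE_6)$, and I write $\sigma$ for the corresponding diagram automorphism of $Q$ from Figure~\ref{ABCD}.

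First I would check that $e_i\mapsto e_i$ extends to an algebra homomorphism $\phi\colon\RTL(M)\to\Br(M)$, i.e. that the defining relations of $\RTL(M)$ hold among the $e_i\in\Br(M)$. The relations $e_i^2=\delta^{\kappa_i}e_i$, and $e_ie_j=e_je_i$ for $i\nsim j$, and $e_je_ie_j=\delta e_j$ for a double bond $i,j$ are, with exactly the same parameters $\kappa_i$, the relations \eqref{0.1.5}, \eqref{0.1.9} and \eqref{0.1.16} of $\Br(M)$. The only remaining relation is $e_ie_je_i=e_i$ for a simple bond $i\sim j$, and this follows from \eqref{0.1.13} (valid with $i,j$ interchanged, the simple-bond hypothesis being symmetric) and \eqref{0.1.3}: $e_ie_je_i=(r_jr_ie_j)e_i=r_jr_i(e_je_i)=r_jr_i(r_ir_je_i)=r_jr_je_i=e_i$, using $r_i^2=1$ twice.

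Next I would bring in the right-hand maps. By the results recalled in Remark~\ref{rem:all} (\cite{CLY2010,CL2011,L2013} and \cite[Table 1]{L2015}) there is an algebra homomorphism $\iota\colon\Br(M)\to\Br(Q)$ sending $e_i$ to $E_i$ when the node $i$ of $M$ lifts to a $\sigma$-fixed node $i$ of $Q$, and to the commuting product $E_{i'}E_{i''}$ when $i$ lifts to a $\sigma$-orbit $\{i',i''\}$ with $i'\nsim i''$, and acting correspondingly on the $r_i$. On the other hand, by the Remark after Definition~\ref{0.1} (\cite{CW2011}), $\TL(Q)$ is the subalgebra of $\Br(Q)$ generated by the $E_i$, $i\in Q$; hence each generator image $\iota(e_i)$ — being $E_i$ or $E_{i'}E_{i''}$ — already lies in $\TL(Q)$.

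Finally I would form the composite $\iota\circ\phi\colon\RTL(M)\to\Br(Q)$, an algebra homomorphism. Since $\RTL(M)$ is generated by $\delta^{\pm1}$ and the $e_i$ and each $\iota\phi(e_i)=\iota(e_i)$ lies in $\TL(Q)$, the image of $\iota\circ\phi$ lies in the subalgebra $\TL(Q)$, so $\iota\circ\phi$ corestricts to an algebra homomorphism $\bar\phi\colon\RTL(M)\to\TL(Q)$ — the fourth arrow. By construction both ways around the square, namely $\RTL(M)\xrightarrow{\phi}\Br(M)\xrightarrow{\iota}\Br(Q)$ and $\RTL(M)\xrightarrow{\bar\phi}\TL(Q)\hookrightarrow\Br(Q)$, equal $\iota\circ\phi$, so the diagram commutes. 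The whole argument is essentially bookkeeping; the only steps needing a little care are the short rewriting producing $e_ie_je_i=e_i$ inside $\Br(M)$ and the correct identification of the images $\iota(e_i)$ from the $\sigma$-orbit structure on $Q$ via \cite[Table 1]{L2015}, together with the observation from \cite{CW2011} that these images do lie in $\TL(Q)$, which is what legitimizes the corestriction in the last step.
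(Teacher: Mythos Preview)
Your proposal is correct and follows essentially the same approach as the paper, which dispatches the result in one line (``By easy verification and \cite[Table 1]{L2015}''); you have simply supplied the details that the paper leaves implicit. In particular, your derivation of $e_ie_je_i=e_i$ inside $\Br(M)$ from \eqref{0.1.13} and \eqref{0.1.3} is exactly the kind of ``easy verification'' intended (and matches the known relation \eqref{3.1.5}), and your identification of the map $\Br(M)\to\Br(Q)$ and the images $\iota(e_i)$ from \cite[Table~1]{L2015} is precisely the second ingredient the paper invokes.
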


\begin{rem}\label{Dieck}
Now we consider the case  $M=\ddC_n$, and $Q=\ddA_{2n-1}$. By \cite{Dieck}, up to some parameter,
the algebra $\RTL(\ddC_n)$ is isomorphic to the Temperley-Lieb algebras of type $\ddB_n$ defined by the author, where the rank and the diagram
representation are given. From \cite{Dieck}, the algebra $\RTL(\ddC_n)$ has rank $\binom{2n}{n}$, and the diagram generators of
$\RTL(\ddC_n)$ are given in Figure \ref{Cgenerators} with the multiplication laws as the classical Brauer algebras. Then by the diagram version of
\cite[Theorem 1.1]{CLY2010}, we find that the four morphisms in the commutative diagrams of  Theorem  \ref{4imbeddings} are injective.
\begin{figure}[!htb]
\begin{center}
\includegraphics[width=.7\textwidth,height=.3\textheight]{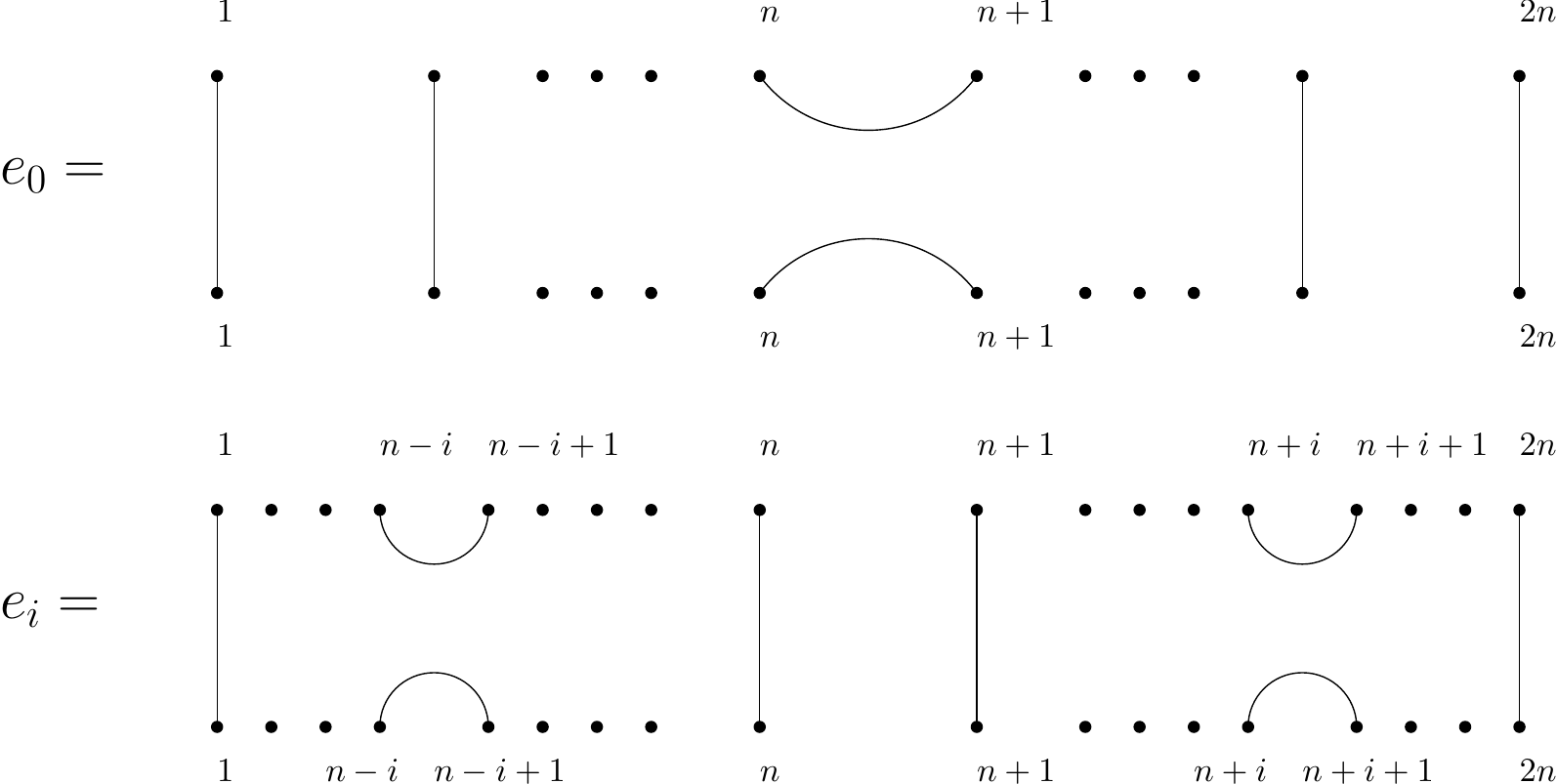}
\end{center}
\caption{The diagram generators of $\RTL(\ddC_n)$}
\label{Cgenerators}
\end{figure}
\end{rem}
As in \cite{CLY2010,CL2011,L2013,L2015}, The following can be verified.
\begin{cor} The diagram automorphisms in Figure \ref{ABCD} induce automorphisms on the corresponding simply-laced Temperley-Lieb algebras.
\end{cor}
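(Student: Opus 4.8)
The plan is to combine the presentation of $\TL(Q)$ from Definition~\ref{0.1} with the elementary fact that a diagram automorphism is exactly a permutation of the node set of $Q$ preserving the Coxeter graph. Fix one of the pairs $(M,Q)$ from the table (so $Q$ is one of $\ddA_{2n-1}$, $\ddD_{n+1}$, $\ddE_6$, $\ddD_4$) and let $\sigma$ be the associated diagram automorphism, regarded as a bijection of the nodes of $Q$ with $i\sim j$ if and only if $\sigma(i)\sim\sigma(j)$. Since $Q$ is simply laced, $\TL(Q)=\RTL(Q)$ is presented by generators $\{e_i\}_{i\in Q}$ subject only to $e_i^2=\delta e_i$, to $e_ie_j=e_je_i$ for $i\nsim j$, and to $e_ie_je_i=e_i$ for $i\sim j$; crucially, each relation depends on the indices only through the adjacency relation.

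First I would set $\sigma(e_i):=e_{\sigma(i)}$ on generators and observe that $\sigma$ carries each defining relation of $\TL(Q)$ to another one of the same shape: $e_{\sigma(i)}^2=\delta e_{\sigma(i)}$; $e_{\sigma(i)}e_{\sigma(j)}=e_{\sigma(j)}e_{\sigma(i)}$ whenever $i\nsim j$, which is equivalent to $\sigma(i)\nsim\sigma(j)$; and $e_{\sigma(i)}e_{\sigma(j)}e_{\sigma(i)}=e_{\sigma(i)}$ whenever $i\sim j$, equivalently $\sigma(i)\sim\sigma(j)$. By the universal property of the presentation, this assignment extends uniquely to an algebra endomorphism $\widehat\sigma$ of $\TL(Q)$ fixing $\delta$.

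Next, because $\sigma$ is a bijection of the node set, $\sigma^{-1}$ is again a graph automorphism of $Q$ and induces an endomorphism $\widehat{\sigma^{-1}}$ of $\TL(Q)$ in the same way. The composites $\widehat\sigma\circ\widehat{\sigma^{-1}}$ and $\widehat{\sigma^{-1}}\circ\widehat\sigma$ fix $\delta$ and every $e_i$, hence equal $\id_{\TL(Q)}$; therefore $\widehat\sigma$ is an automorphism. Equivalently, one may note that, viewing $\TL(Q)$ as the subalgebra of $\Br(Q)$ generated by the $e_i$ (as recalled before Theorem~\ref{4imbeddings}), $\widehat\sigma$ is the restriction to $\TL(Q)$ of the Brauer-algebra automorphism $\sigma$ of Remark~\ref{rem:all}: that automorphism permutes the generating set $\{e_i\}$ of $\TL(Q)$ among themselves, hence maps $\TL(Q)$ onto itself, and it is height-invariant by Proposition~\ref{prop:heightinv}.

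There is essentially no obstacle here; the only thing requiring care is the purely combinatorial check that the maps drawn in Figure~\ref{ABCD} genuinely are graph automorphisms of $Q$ — the flip of $\ddA_{2n-1}$, the order-two fold of $\ddD_{n+1}$ exchanging the two short legs, the flip of $\ddE_6$, and the order-three rotation of $\ddD_4$ — which is immediate from the diagrams in Table~\ref{DKdiagram}. Note that the $\ddD_4$ automorphism has order $3$ rather than $2$, but the argument above applies verbatim to a graph automorphism of any finite order.
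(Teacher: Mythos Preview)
Your proof is correct and is precisely the verification the paper has in mind: the paper does not supply a proof at all, stating only that ``As in \cite{CLY2010,CL2011,L2013,L2015}, the following can be verified,'' so your explicit check via the presentation of $\TL(Q)$ (or, equivalently, by restricting the Brauer-algebra automorphism of Remark~\ref{rem:all}) is exactly the intended argument made precise.
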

As in \cite{CLY2010,CL2011,L2013,L2015}, we define $\STL(Q)$ is the subalgebra of $\TL(Q)$ generated by the   $\sigma$-invariant submomoid of $\TLM(Q)$, where
$Q$ can be $\ddA_{2n-1}$, $\ddD_{n+1}$ or $\ddE_6$.
\section{Some conclusions of Brauer algebras of simply-laced type}\label{sect:concls}
Let $Q$ be a spherical Coxeter diagram of simply laced type, i.e., its connected components are of type
$\ddA$, $\ddD$, $\ddE$ as listed in Table \ref{DKdiagram}. This section is to summarize some results in \cite{CGW2006}.

When $Q$ is  $\ddA_n$, $\ddD_n$, $\ddE_6$, $\ddE_7$, or $\ddE_8$, we denote it as $Q\in{\rm ADE}$.
Let $(W, T)$ be the Coxeter system of type $Q$ with $T=\{R_1,\ldots,R_n\}$ associated to the diagram of
$Q$ in Table \ref{DKdiagram}.
Let $\Phi$ be the root system of type $Q$,  let $\Phi^+$ be its positive root system, and let $\alpha_i$ be the simple root
associated to the node $i$ of $Q$. We are interested in sets $B$ of mutually commuting reflections, which has a bijective correspondence with sets of
mutually orthogonal roots of $\Phi^+$, since each reflection in $W$ is uniquely determined by a positive root and vice versa.
\begin{rem}\label{rem:positiveaction}
The action of $w\in W$ on $B$ is given by conjugation in case $B$ is described by reflections and given by
$w\{\beta_1,\ldots, \beta_p\}=\Phi^+\cap \{\pm w\beta_1,\ldots, \pm w\beta_p \}$, in case $B$ is described by positive roots.
For example, $R_4R_1R_2R_1\{\alpha_1+\alpha_2, \alpha_4\}=\{\alpha_1+\alpha_2,\alpha_4\}$, where $Q=\ddA_4$.
\end{rem}
For $\alpha$, $\beta\in \Phi$, we write $\alpha\sim\beta$ to denote $|(\alpha,\beta)|=1$. Thus, for $i$ and $j$ nodes of
$Q$, we have $\alpha_i\sim\alpha_j$ if and only if $i\sim j$.
\begin{defn}
Let $\mathfrak{B}$ be a   $W$-orbit of sets of  mutually orthogonal positive roots. We say that
$\mathfrak{B}$ is an \emph{admissible orbit} if for each $B\in \mathfrak{B}$, and $i$, $j\in Q$ with $i\not\sim j$
 and $\gamma$, $\gamma-\alpha_i+\alpha_j\in B$ we have $r_iB=r_jB$, and each element in $\mathfrak{B}$ is called an admissible root set.
\end{defn}
This is the definition from \cite{CGW2006},  and there is another equivalent definition in \cite{CFW2008}. We also state it here.
\begin{defn}
 Let $B\subset\Phi^+$ be a mutually orthogonal root set. If for all $\gamma_1$, $\gamma_2$, $\gamma_3\in B$
 and $\gamma\in \Phi^+$, with $(\gamma,\gamma_i)=1$, for
 $i=1$, $2$, $3$, we have $2\gamma+\gamma_1+\gamma_2+\gamma_3\in B$, then $B$ is called an admissible root set.
 \end{defn}
 By these two definitions, it follows that the intersection of two admissible root sets are admissible.
 It can be checked by definition that the intersection of two admissible sets are still admissible.  Hence
for a given  set $X$ of mutually orthogonal positive roots, the unique smallest admissible set containing
$X$ is called the admissible closure of $X$, and denoted as $X^{\rm cl}$ (or $\overline{X}$). Up to
the action of the corresponding Weyl groups,
all admissible root sets of type $\ddA_n$, $\ddD_n$,  $\ddE_6$, $\ddE_7$, $\ddE_8$ have appeared in  \cite{CFW2008}, \cite{CGW2009} and \cite{CW2011},
and are listed
in  Table \ref{table:admADE}. In the table, the
set
$Y(t)^*$  consists of all $\alpha^*$ for  $\alpha\in Y(t)$, where $\alpha^*$ is    the unique
positive root orthogonal to $\alpha$ and all other positive roots orthogonal
to $\alpha$ for type $\ddD_n$ with $n> 4$.  For type $\ddD_n$, if we considier the root systems are realized in $\R^{n}$,
with $\alpha_1=\epsilon_2-\epsilon_1$, $\alpha_2=\epsilon_2+\epsilon_1$, $\alpha_i=\epsilon_i-\epsilon_{i-1}$, for $3\le i\le n$,
then $\Phi^+=\{\epsilon_j\pm\epsilon_i\}_{1\le i<j\le n}$, then  $(\epsilon_j\pm\epsilon_i)^*=\epsilon_j\mp\epsilon_i$.  For $\ddD_4$,
the $t$ can be $0$, $1$, $2$, $3$, which means the number of nods in the  coclique.
When $t=2$, although in the Dynkin diagram $\{\alpha_1,\alpha_2\}$ and $\{\alpha_1,\alpha_4\}$ are symmetric,  they are  in the
 different orbits under the  Weyl group's actions.  Then the admissible root sets for $\ddD_4$ can be written as the $W(\ddD_4)$'s
 orbits of $\emptyset$, $\{\alpha_3\}$,  $\{\alpha_1,\alpha_2\}$,  $\{\alpha_1,\alpha_4\}$, and $\{\alpha_1,\alpha_2,\alpha_4,\alpha_1+\alpha_2+\alpha_4+2\alpha_3\}.$
\begin{table}
\caption{Admissible root sets of simply laced type}
\label{table:admADE}
 \begin{center}
\begin{tabular}{|c|c|}
\hline
$Q$&representatives of orbits \, under\, $W(Q)$\\
\hline
$\ddA_n$&$\{\alpha_{2i-1}\}_{i=1}^{t}$,\,$0\le t\le \left\lfloor{(n+1)/2}\right\rfloor. $\\
\hline
$\ddD_n$&$Y(t)=\{\alpha_{n+2-2i}, \alpha_{n-2},\ldots, \alpha_{n+2-2t}\}$ \, $0\le t\le \left\lfloor{n/2}\right\rfloor. $\\
        & $\{\alpha_{n+2-2i}, \alpha_{n-2},\ldots, \alpha_{4}, \alpha_1\}$  \text{if}    $2|n$\\
        & $Y(t)\cup Y(t)^*$ \, $0\le t\le \left\lfloor{n/2}\right\rfloor$\\
\hline
$\ddE_6$ & $\emptyset$, $\{\alpha_6\}$, $\{\alpha_6, \alpha_4\}$,  $\{\alpha_6, \alpha_2, \alpha_3\}^{\rm cl}$ \\
\hline
$\ddE_7$ & $\emptyset$, $\{\alpha_7\}$, $\{\alpha_7, \alpha_5\}$,  $\{\alpha_5, \alpha_5, \alpha_2\}$, $\{\alpha_7, \alpha_2, \alpha_3\}^{\rm cl}$,
                               $\{\alpha_7, \alpha_5, \alpha_2, \alpha_3\}^{\rm cl}$   \\
\hline
$\ddE_8$ & $\emptyset$, $\{\alpha_8\}$, $\{\alpha_8, \alpha_6\}$,   $\{\alpha_8, \alpha_2, \alpha_3\}^{\rm cl}$,
                               $\{\alpha_8, \alpha_5, \alpha_2, \alpha_3\}^{\rm cl}$   \\
\hline
\end{tabular}
\end{center}
\end{table}

\begin{example} If $Q=\ddD_4$, the root set $\{\alpha_1, \alpha_2, \alpha_4\}$ is mutually orthogonal but not admissible,
and its admissible closure is $\{\alpha_1, \alpha_2, \alpha_4, \alpha_1+\alpha_2+2\alpha_3+\alpha_4\}$.
\end{example}
\begin{defn}
\label{df:cA}
Let $\cA$ denote the collection of all admissible subsets of $\Phi$ consisting of
mutually orthogonal  positive roots.
Members of $\cA$ are called admissible sets.
\end{defn}
Now we consider the actions of  $R_i$ on an admissible $W$-orbit $\fB$. When $R_iB\neq B$, We say that
$R_i$ lowers $B$ if there is a root $\beta\in B$ of minimal height  among those moved by $R_i$ that
satisfies $\beta-\alpha_i\in \Phi^+$ or $R_iB<B$.
We say that $R_i$ raises $B$ if there is a root $\beta\in B$ of minimal height among
those moved by $R_i$ that satisfies $\beta+\alpha_i\in \Phi^+$ or $R_iB>B$. By this
we can set an  partial order on $\fB=WB$. The poset $(\fB, <)$ with this minimal ordering is called the monoidal poset (with respect to $W$) on $\fB$ (so $\fB$ should be admissible for the poset to be monoidal). If $\fB$ just consists of sets
of a single root, the order is determined by the canonical height function on roots. There is an important conclusion in \cite{CGW2006},  stated  below.
This theorem plays a crucial role in obtaining a basis for Brauer algebra of simply laced type in \cite{CFW2008}.
\begin{thm}\label{thm:maximal}
 There is a unique maximal element in $\fB$.
 \end{thm}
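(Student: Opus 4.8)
The plan is to reduce the statement to the connected simply-laced types $\ddA_n$, $\ddD_n$, $\ddE_6$, $\ddE_7$, $\ddE_8$ and to the finite list of admissible $W$-orbits tabulated in Table \ref{table:admADE}, then to exhibit a distinguished representative $B_{\max}$ in each orbit and verify it is maximal and unique. First I would record the basic structural facts about the monoidal poset: the order on $\fB=WB$ is generated by the covering moves ``$R_i$ raises $B$'', and $R_i$ raises $B$ precisely when $R_i$ does not lower it and $R_iB\neq B$; so an element $B$ is maximal iff for every node $i$ either $R_iB=B$ or $R_i$ lowers $B$. Equivalently, $B$ is maximal iff no simple reflection raises it. The key reduction is that $(\fB,<)$ is a connected poset (any two elements are joined by a zigzag of covering relations, since $W$ acts transitively on $\fB$ and each $w\in W$ is a product of the $R_i$), so it suffices to show it has a unique \emph{local} maximum; a connected finite poset with a unique maximal element is automatic once we show every maximal element coincides.

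For the case where $\fB$ consists of singletons $\{\beta\}$, $\beta\in\Phi^+$, the assertion is classical: raising/lowering is governed by the height function $\het$ on $\Phi^+$, and $\Phi^+$ has a unique element of maximal height, namely the highest root $\tilde\alpha$, which is the unique positive root not raised by any $R_i$. So here $B_{\max}=\{\tilde\alpha\}$. For larger admissible sets I would go type by type using Table \ref{table:admADE}. In each case one writes down a candidate $B_{\max}$ built from mutually orthogonal roots of large height (for type $\ddA_n$ the set $\{\alpha_{2i-1}\}$ is $W$-conjugate to a ``staircase'' of high roots; for type $\ddD_n$ the sets $Y(t)$ and $Y(t)\cup Y(t)^*$ similarly admit a top representative using $\epsilon_j\pm\epsilon_i$ coordinates; for $\ddE_6,\ddE_7,\ddE_8$ there are only finitely many orbits and each admissible closure can be handled by direct inspection), and then checks the maximality criterion: for each node $i$, either $R_i$ fixes $B_{\max}$ (it permutes the roots of $B_{\max}$, possibly after the admissible-closure adjustment forced by the defining relation $r_iB=r_jB$ when $\gamma,\gamma-\alpha_i+\alpha_j\in B$), or the minimal-height root of $B_{\max}$ moved by $R_i$ is sent to $\beta-\alpha_i$, i.e. $R_i$ lowers. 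Uniqueness then follows because any other maximal $B'$ is connected to $B_{\max}$ by covering moves, and if $B'\neq B_{\max}$ the last raising move along a maximal chain from $B'$ up produces something strictly above $B'$, contradicting maximality of $B'$ — more carefully, one shows directly that from any $B\neq B_{\max}$ there is a node $i$ with $R_i$ raising $B$, by locating a root of $B$ that can be increased or, when all roots of $B$ are locally maximal individually, using the admissibility relations to produce a raising move on the set.

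The main obstacle I expect is precisely this last point: showing that a non-maximal $B$ always admits a raising simple reflection, i.e. that the poset has no ``spurious'' local maxima other than $B_{\max}$. The subtlety is that raising/lowering of a \emph{set} is not simply the coordinatewise comparison of heights — the tie-breaking rule ``$R_iB<B$'' in the definition and the forced equalities $r_iB=r_jB$ from admissibility mean one must argue that the global order is well behaved. I would handle this by the standard device used in \cite{CGW2006}: introduce a height function on the orbit $\fB$ (for instance $B\mapsto \sum_{\beta\in B}\het(\beta)$, or a refinement thereof that respects the monoidal order), show it is weakly monotone for covering moves, and show its maximum value is attained only at $B_{\max}$; combined with connectivity of $\fB$ this forces uniqueness. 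The type-by-type verification that this height function is genuinely compatible with the combinatorially-defined order, and that its maximizer is unique, is where the real work lies, but for the finitely many exceptional orbits in $\ddE_6,\ddE_7,\ddE_8$ it is a finite check, and for the infinite families $\ddA_n,\ddD_n$ it follows from the explicit coordinate descriptions.
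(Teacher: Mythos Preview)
The paper does not prove Theorem \ref{thm:maximal}; it is merely quoted from \cite{CGW2006} (Cohen--Gijsbers--Wales) as background for the rewriting machinery used later. There is therefore no proof in this paper to compare your proposal against.

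That said, your outline is broadly in the spirit of the argument in \cite{CGW2006}: a type-by-type analysis, aided by an auxiliary height-type function on the orbit $\fB$ compatible with the raising/lowering moves. Two cautions are in order. First, watch for circularity: you invoke Table \ref{table:admADE}, but the classification recorded there is drawn from papers that build on \cite{CGW2006}; if you want a self-contained proof you should obtain the orbit representatives independently (this is doable by direct Weyl-group combinatorics, but it is not free). Second, and more substantively, the ``standard device'' you appeal to --- choosing a function such as $B\mapsto\sum_{\beta\in B}\het(\beta)$, proving it is monotone for covers, and showing its maximizer is unique --- is precisely where the content lies, and it is not routine. The monoidal order is defined via the minimal-height root \emph{moved} by $R_i$, not via the total height, so monotonicity of the naive sum is not immediate; in \cite{CGW2006} establishing a compatible order and ruling out spurious local maxima takes a careful and lengthy case analysis, particularly in types $\ddD_n$ and $\ddE_n$. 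Your proposal correctly flags this as the main obstacle but does not yet supply the argument, so as written it is a reasonable plan rather than a proof.
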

 For any $\beta\in\Phi^+$ and $i\in\{1,\ldots,n\}$, there exists a $w\in
W$ such that $\beta = w\alpha_i$. Then $R_\beta := wR_iw^{-1}$ and
$E_\beta := wE_iw^{-1}$ are well defined (this is well known from Coxeter
group theory for $R_\beta$;
see \cite[Lemma 4.2]{CFW2008} for $E_\beta$).  If
$\beta,\gamma\in\Phi^+$ are mutually orthogonal, then $E_\beta$ and
$E_\gamma$ commute (see \cite[Lemma 4.3]{CFW2008}). Hence, for $B\in\cA$, we
 define the product
\begin{eqnarray}
\label{eqn:EprodB}
E_B &=& \prod_{\beta\in B} E_\beta,
\end{eqnarray}
which is a quasi-idempotent, and the normalized version
\begin{eqnarray}
\label{eqn:EhatB}
\hE_B &=& \delta^{-|B|} E_B,
\end{eqnarray}
which is an idempotent element of the Brauer monoid.
For a mutually orthogonal root subset $X\subset \Phi^+$, we have
\begin{eqnarray}
\label{eqn:Ecloure}
E_{X^{\rm cl}}=\delta^{|X^{\rm cl}\setminus X|}E_{X}.
\end{eqnarray}
%
Let $C_X=\{i\in Q\mid \alpha_i \perp X\}$ and
let $W(C_{X})$ be the subgroup generated by the
generators of nodes in $C_X$.
The subgroup $W(C_{X})$ is called the \emph{centralizer} of $X$.
The normalizer of $X$, denoted by $N_{X}$ can be defined as
$$N_{X} =\{w\in W\mid E_X w=w E_X\}.$$
We let $D_X$ denote a set of right coset representatives for $N_X$ in $W$.\\
In \cite[Definition 3.2]{CFW2008}, an action of the Brauer monoid
$\BrM(Q)$ on the collection $\cA$ of admissible root sets
in $\Phi^+$ was indicated below, where  $Q\in {\rm ADE}$.
\begin{defn}\label{eq:aboveaction}
There is an action of the Brauer monoid $\BrM(Q)$ on the collection
$\cA$.  The generators $R_i$ $(i=1,\ldots,n)$ act by the natural action of
Coxeter group elements on its  positive root sets as in Remark \ref{rem:positiveaction},
and the element $\delta$ acts as the identity,
and the action of $E_i$ $(i=1,\ldots,n)$ is defined by
\begin{equation}
E_i B :=\begin{cases}
B & \text{if}\ \alpha_i\in B, \\
(B\cup \{\alpha_{i}\})^{\rm cl} & \text{if}\ \alpha_i\perp B,\\
R_\beta R_i B & \text{if}\ \beta\in B\setminus \alpha_{i}^{\perp}.
\end{cases}
\end{equation}
\end{defn}

We will refer to this action as the admissible set action.
This monoid action plays an important role in getting a basis of $\BrM(Q)$ in \cite{CFW2008}. For the basis, we state one  conclusion from \cite[Proposition 4.9]{CFW2008} below.
 \begin{prop}\label{ADErewform}
  Each element of the Brauer monoid $\BrM(Q)$ can be written in the form $$\delta^k uE_{X}zv,$$ where
 $X$ is the highest element from one $W$-orbit in $\cA$, $u$, $v^{-1}\in D_X$, $z\in W(C_X)$, and $k\in \Z$.
 \end{prop}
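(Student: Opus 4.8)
The plan is to reduce to the known basis structure of $\BrM(Q)$ recorded in \cite[Proposition 4.9]{CFW2008}, exploiting the admissible set action from Definition \ref{eq:aboveaction} as a bookkeeping device. First I would fix a monomial $a\in\BrM(Q)$ and examine its image $a\,\emptyset$ under the admissible set action, where $\emptyset$ is the empty root set. This image is some admissible set $B\in\cA$; let $\fB=WB$ be its $W$-orbit, and let $X$ be the unique highest element of $\fB$, which exists by Theorem \ref{thm:maximal}. The idea is that $E_B$ ``records'' the Temperley-Lieb part of $a$ up to the Coxeter-group ambiguity, and that $E_X$ is a canonical representative of $E_B$ inside that orbit, since for any $w\in W$ with $wB=X$ we have $E_X = wE_Bw^{-1}$ by the definition of $E_\beta$ preceding \eqref{eqn:EprodB} and the fact that $E_X=\prod_{\beta\in X}E_\beta=\prod_{\beta\in B}E_{w\beta}$.

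Next I would write $a$ (working in the monoid, ignoring powers of $\delta$ for the moment) as a product of Coxeter generators and $E_\beta$'s, and push all the $R_i$'s appropriately: using the relations $R_iE_i=E_i$, $R_jR_iE_j=E_iE_j$, and more generally the normalizer relations $E_Xw=wE_X$ for $w\in N_X$, together with the coset decomposition $W=\bigsqcup_{u\in D_X} N_X u$, one rewrites $a$ in the shape $\delta^k\, u\, E_X\, z\, v$ with $u,v^{-1}\in D_X$ and $z\in W(C_X)$; the element $z$ absorbs exactly the part of the residual Coxeter word that commutes with $E_X$ but is not in $N_X$ (note $W(C_X)\subseteq N_X$ since generators orthogonal to $X$ fix $E_X$). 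Each elementary relation used either preserves or alters the power of $\delta$ in a controlled way (e.g.\ $E_i^2=\delta E_i$), so the exponent $k\in\Z$ is well-defined at the end. This is essentially a transcription of the argument in \cite{CFW2008}, and since that reference establishes precisely this normal form, the cleanest route is simply to invoke it: the statement of Proposition \ref{ADErewform} \emph{is} \cite[Proposition 4.9]{CFW2008}, restated in the present notation.

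The main obstacle, and the point that deserves care, is verifying that the ``Temperley-Lieb content'' of $a$ genuinely corresponds to $a\,\emptyset$ and that no information is lost when one replaces the actual product of $E_\beta$'s appearing in a reduced expression for $a$ by $E_X$ for the highest element $X$ of the orbit. This hinges on two facts: first, that $E_B$ depends only on the set $B$ and not on the expression (the commuting relation \cite[Lemma 4.3]{CFW2008} plus well-definedness of $E_\beta$), and second, that the admissible closure operation in the third case of Definition \ref{eq:aboveaction} is matched on the algebra side by \eqref{eqn:Ecloure}, $E_{X^{\rm cl}}=\delta^{|X^{\rm cl}\setminus X|}E_X$, so that closures only contribute powers of $\delta$. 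Granting these, the uniqueness of the highest element (Theorem \ref{thm:maximal}) makes $X$ canonical within its orbit, and the coset representatives $u,v^{-1}\in D_X$ together with $z\in W(C_X)$ parametrize the remaining freedom. I would therefore present the proof as a short reduction: extract $\fB=W(a\emptyset)$ and its top element $X$; decompose the Coxeter parts via $D_X$-cosets; collect $\delta$-powers; and cite \cite[Proposition 4.9]{CFW2008} for the details of the rewriting, which go through verbatim in the simply-laced case $Q\in\ADE$ under consideration here.
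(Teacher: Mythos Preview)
Your proposal is correct and matches the paper's treatment: the paper does not prove this proposition at all but simply states it as a quotation of \cite[Proposition 4.9]{CFW2008}, and you ultimately do the same, explicitly noting that the statement \emph{is} that result in the present notation. Your additional sketch of the rewriting argument is extra exposition not found in the paper, but it is consistent with the cited source and does no harm.
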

 \begin{rem}\label{height}
 There is a  more general version for simply laced types in \cite{CW2011}.
We keep  notations as  in \cite[Section 2]{CW2011} and first introduce some basic conceptions.
Let $Q$ be  the  diagram  of a  connected finite simply laced Coxeter
group (type $\ddA_n$, $\ddD_n$, $\ddE_6$,  $\ddE_7$,  $\ddE_8$).
Then $\BrM(Q)$ is the associated Brauer monoid as  in Definition \ref{1.1}. Recall
an element  $a\in \BrM(Q)$ is said to be of \emph{height} $t$ if the minimal number of
$R_i$  occurring in an expression of $a$ is $t$, denoted by $\rm{ht}$$(a)$.
  By $B_Y$ we denote
the admissible closure  of $\{\alpha_i|i\in Y\}$, where $Y$ is a coclique
of $Q$. The set $B_Y$ is a minimal element in the $W(Q)$-orbit of $B_Y$ which is endowed with a  poset  structure
induced by the partial ordering $<$
defined on $W(Q)$-orbits  in $\cA$. If $d$ is the Hasse diagram distance for $W(Q)B_Y$
from $B_Y$ to the unique maximal element, then for $B\in W(Q)B_Y$ the height of $B$, already used in Definition
notation $\rm{ht}$$(B)$, is $d-l$, where $l$ is the distance in the
Hasse diagram from $B$ to the maximal element. The Figure \ref{fig:Hassediagram} is a Hasse diagram of admissible sets of type $\ddA_4$ with $2$  mutually orthogonal positive roots.
 As indicated in Theorem \ref{thm:maximal}, the set $\{\alpha_1+\alpha_2+\alpha_3, \alpha_2+\alpha_3+\alpha_4\}$ is the maximal root set in its $W(\ddA_4)$-orbit.
\begin{figure}[!htb]
\begin{center}
\includegraphics[width=.9\textwidth,height=.5\textheight]{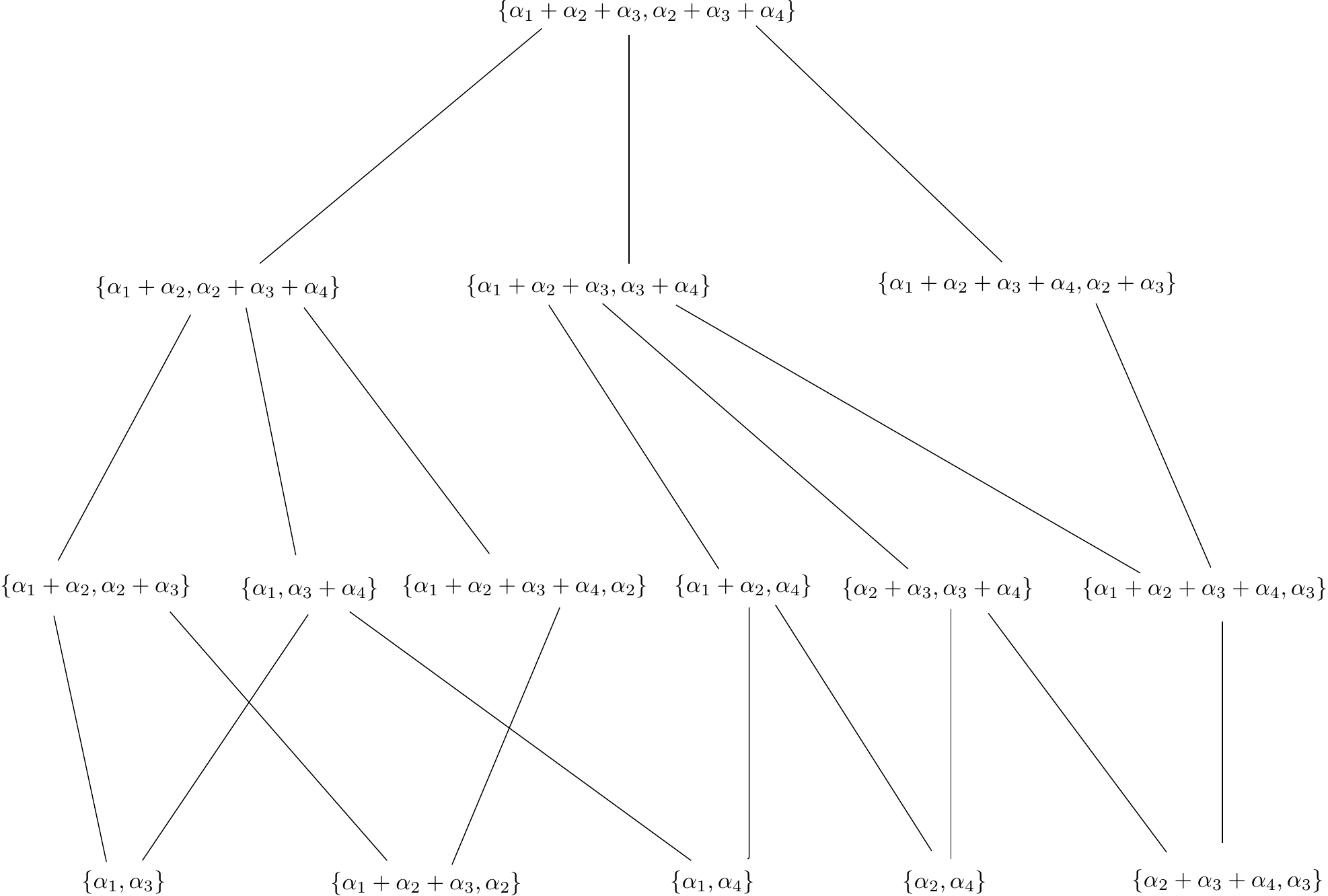}
\end{center}
\caption{A Hasse diagram of type $\ddA_4$.}
\label{fig:Hassediagram}
\end{figure}
\end{rem}
\begin{thm} \label{thm:genralwriting}
(\cite[Theorem 2.7]{CW2011}) Each monomial  $a$ in $\BrM(Q)$ can be
uniquely written as $\delta^{i} a_{B} \hat{E}_Y h a_{B'}^{\rm op}$ for some $i\in \Z$ and $h\in W(Q_{Y})$,
where $W(Q_{Y})$ is the group of invertible elements in $\hat{E_Y}W(Q)\hat{E_Y}$,
$B=a\emptyset$, $B^{'}=\emptyset a $, $a_{B}\in \BrM(Q)$, $a_{B'}^{\rm op}\in \BrM(Q)$ and
\\ (i) $a\emptyset=a_{B}\emptyset=a_{B}B_Y$,  $\emptyset a= \emptyset a_{B'}^{\rm op}= B_Y a_{B'}^{\rm op}$,
\\(ii) $\rm{ht}$$(B)=$\rm{ht}$(a_{B})$, $\rm{ht}$$(B')=$\rm{ht}$(a_{B'}^{\rm op})$.
\end{thm}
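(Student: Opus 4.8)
The plan is to refine the rewriting form of Proposition \ref{ADErewform} until it becomes \emph{unique} and \emph{height-controlled}, using the admissible set action of Definition \ref{eq:aboveaction} to manufacture the invariants that rigidify the decomposition. The first step is to observe that for any monomial $a\in\BrM(Q)$ the sets $B:=a\emptyset$ and $B':=\emptyset a$ (the latter obtained from the anti-involution, i.e.\ the right/opposite action) are well-defined elements of $\cA$ lying in a single $W(Q)$-orbit $\fB$. Indeed, writing $a=\delta^{k}uE_Xzv$ as in Proposition \ref{ADErewform} and using that $W(Q)$ fixes $\emptyset$, that $E_X\emptyset=X$ (as $X$ is already admissible, being highest in its orbit), and that $z\in W(C_X)$ fixes $X$, one reads off $B=uX$ and, dually, $B'=Xv$. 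Since $X$ is the \emph{highest} element of its orbit whereas $B_Y$ (the admissible closure of a coclique $Y$) is the \emph{minimal} one, Theorem \ref{thm:maximal} together with the description of the monoidal poset in Remark \ref{height} lets me transport the form so that it is based at $B_Y$: $a=\delta^{i}\,a_B\,\hat E_Y\,h\,a_{B'}^{\op}$ with $a_B\emptyset=a_BB_Y=B$, $\emptyset a_{B'}^{\op}=B_Ya_{B'}^{\op}=B'$, and the middle factor forced into $\hat E_Y W(Q)\hat E_Y$.

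For existence I would, given $a$, set $B=a\emptyset$ and $B'=\emptyset a$, read off the orbit and hence $Y$, and then choose $a_B\in\BrM(Q)$ of minimal height with $a_B\emptyset=B$ and, symmetrically, $a_{B'}^{\op}$ of minimal height with $\emptyset a_{B'}^{\op}=B'$. Such minimal witnesses exist: by Remark \ref{height} the height of an admissible set is precisely the number of raising/lowering steps needed to join it to $B_Y$ in the monoidal poset, each step realized by a single Coxeter generator, while $\hat E_Y$ has height $0$; concatenating such a word of Coxeter generators with $\hat E_Y$ produces a monomial of height $\mathrm{ht}(B)$ carrying $\emptyset$ to $B$, which is condition (ii). Substituting $a_B$ and $a_{B'}^{\op}$ into the form of Proposition \ref{ADErewform} and cancelling, the leftover middle factor $h$ lies in $\hat E_Y W(Q)\hat E_Y$, and because $a_B$ and $a_{B'}^{\op}$ have been arranged to absorb all the motion of the admissible sets, $h$ must be invertible there, i.e.\ $h\in W(Q_Y)$.

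Uniqueness is the step I expect to be the main obstacle. Suppose $\delta^{i}a_B\hat E_Y h a_{B'}^{\op}=\delta^{i'}a_B'\hat E_Y h'(a_{B'}')^{\op}$. Applying the admissible set action on the left to $\emptyset$, and dually on the right, recovers $B$ and $B'$, so the two left factors send $\emptyset$ to the same admissible set (similarly on the right), and the power of $\delta$ is then pinned down by normalization against $a$ itself. The genuine work is to show that a \emph{minimal-height} monomial carrying $\emptyset$ to $B$ is unique once one agrees to push the $N_{B_Y}$-ambiguity and the $W(C_{B_Y})$-ambiguity into the central factor $h$; this is where I would invoke the coset-representative machinery for $D_X$ and $N_X$ underlying Proposition \ref{ADErewform}, the idempotency of $\hat E_Y$, and a height-additivity statement guaranteeing that no cancellation takes place among $a_B$, $\hat E_Y h$, and $a_{B'}^{\op}$. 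With the outer factors shown unique, equality of the two expressions forces $h=h'$ in $W(Q_Y)$, completing the argument. As this statement is exactly \cite[Theorem 2.7]{CW2011}, the details of this last point are carried out there.
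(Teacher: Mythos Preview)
The paper does not supply a proof of this theorem at all: it is quoted verbatim as \cite[Theorem 2.7]{CW2011} and used as a black box, with no argument given. So there is nothing in the present paper to compare your proposal against. Your sketch is a reasonable outline of how one might reconstruct the proof from the ingredients assembled in Section~\ref{sect:concls} (Proposition~\ref{ADErewform}, the admissible set action, the monoidal poset structure), and you correctly identify the uniqueness of the minimal-height witnesses $a_B$, $a_{B'}^{\op}$ as the nontrivial point; but since the paper defers entirely to \cite{CW2011} for this result, the appropriate comparison would be with that reference rather than with anything here.
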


\section{The isomorphism of $\RTL(\ddC_n)$ and $\STL(\ddA_{2n-1})$}\label{sect:IsoCn}
In this section, we focus on type $\ddC_n$.
First recall the Dynkin diagram of type $\ddC_n$.
$$ \ddC_n\quad = \quad \Cn .$$
From \cite{CLY2010}, the automorphism $\sigma$ on $\BrM(\ddA_{2n-1})$ has a diagram explanation, which  means the symmetry to the middle axis.
Therefore, a $\sigma$-invariant monomial is a diagram which is symmetric to the middle axis. The same explanation also can be applied to  the
$\TLM(\ddA_{2n-1})$ , a submonoid  of  $\BrM(\ddA_{2n-1})$. Combining the diagram representation of $\TL(\ddA_{2n-1})$ and the symmetry of $\sigma$,
therefore the algebra $\STL(\ddA_{2n-1})$ has a basis consisting of the diagram monomials of $\BrM(\ddA_{2n-1})$ which have no intersections and
are symmetric to the middle axis. We give one example in the Figure \ref{fig:e1e0}.
\begin{figure}[!htb]
\begin{center}
\includegraphics[width=.5\textwidth,height=.3\textheight]{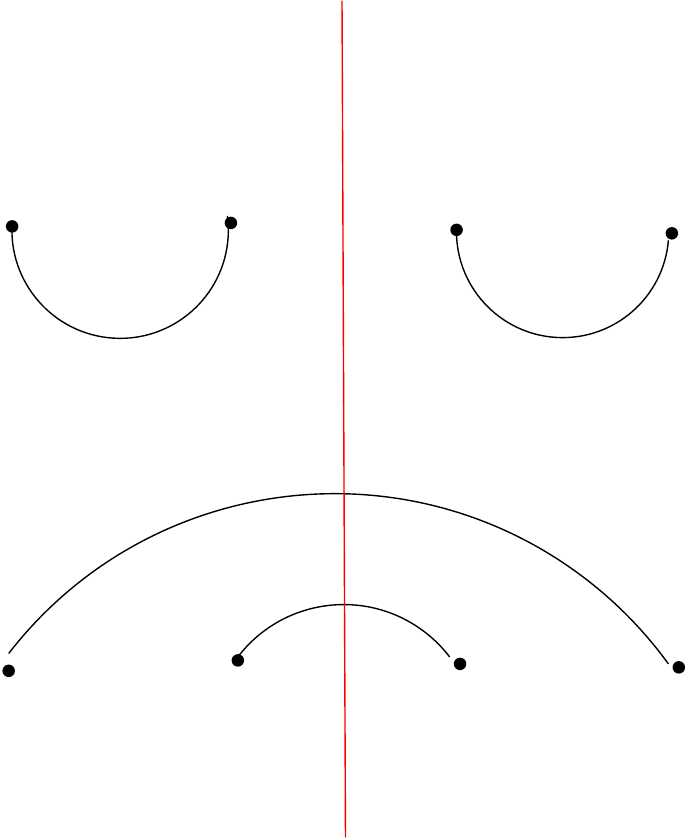}
\caption{$\phi(e_1e_0)$ in $\STL(\ddA_3)$}
\label{fig:e1e0}
\end{center}
\end{figure}
By the diagram images of generators of $\TL(\ddC_n)$,   we can obtain the following lemma.
\begin{lemma}\label{Cninto}
The algebra $\phi(\TL(\ddC_n))$  is a subalgebra of  $\STL(\ddA_{2n-1})$.
\end{lemma}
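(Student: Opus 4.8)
The plan is to show that $\phi$ maps each generator $e_i$ of $\TL(\ddC_n)$ into $\STL(\ddA_{2n-1})$, and then conclude that the subalgebra of $\Br(\ddA_{2n-1})$ generated by these images lands inside $\STL(\ddA_{2n-1})$. First I would recall from Theorem \ref{4imbeddings} that $\phi$ sends the generator $e_i$ of $\RTL(\ddC_n)=\TL(\ddC_n)$ to the generator $e_i$ of $\Br(\ddC_n)$, and that $\Br(\ddC_n)$ embeds into $\Br(\ddA_{2n-1})$ as the $\sigma$-invariant submonoid algebra (this is the diagram version of \cite[Theorem 1.1]{CLY2010}, as invoked in Remark \ref{Dieck}). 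Concretely, the composite sends $e_0\mapsto E_n$ (the middle Temperley--Lieb generator of type $\ddA_{2n-1}$, which is automatically $\sigma$-fixed) and $e_i\mapsto E_iE_{2n-i}$ for $1\le i\le n-1$, a product of two commuting generators exchanged by the reflection $\sigma$. In particular each $\phi(e_i)$ is a $\sigma$-invariant monomial of height $0$ in $\BrM(\ddA_{2n-1})$, hence lies in $\STL(\ddA_{2n-1})$ by the description recalled just before the lemma (the $\sigma$-invariant, crossing-free diagrams symmetric about the middle axis).

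Next I would check closure under multiplication. Since $\STL(\ddA_{2n-1})$ is by definition the subalgebra of $\TL(\ddA_{2n-1})$ spanned by the $\sigma$-invariant submonoid of $\TLM(\ddA_{2n-1})$, it suffices to note that a product of $\sigma$-invariant elements of $\TLM(\ddA_{2n-1})$ is again $\sigma$-invariant: for $a,b$ with $\sigma(a)=a$, $\sigma(b)=b$ one has $\sigma(ab)=\sigma(a)\sigma(b)=ab$, because $\sigma$ is an algebra automorphism (the corollary after Theorem \ref{4imbeddings}). Therefore the submonoid of $\TLM(\ddA_{2n-1})$ generated by $\{\phi(e_i)\}$, together with $\delta^{\pm1}$, consists of $\sigma$-invariant monomials, and its $\Z[\delta^{\pm1}]$-span — which is exactly $\phi(\TL(\ddC_n))$, since $\phi$ is an algebra map generated by the $e_i$ — is contained in $\STL(\ddA_{2n-1})$.

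Finally I would remark that $\phi(\TL(\ddC_n))$ is genuinely a subalgebra (closed under the ring operations and containing $1$): this is immediate because it is the image of an algebra homomorphism. Putting these together yields $\phi(\TL(\ddC_n))\subseteq \STL(\ddA_{2n-1})$ as subalgebras of $\Br(\ddA_{2n-1})$. The only mildly delicate point — the ``main obstacle'' — is pinning down precisely the diagram image of each generator $e_i$ under the composite $\TL(\ddC_n)\to\Br(\ddC_n)\hookrightarrow\Br(\ddA_{2n-1})$ and verifying it is crossing-free and symmetric; this is handled by reading off the generators from Figure \ref{Cgenerators} and the folding map of \cite{CLY2010}, exactly as illustrated in Figure \ref{fig:e1e0}, so no new argument is needed beyond bookkeeping with the diagrams.
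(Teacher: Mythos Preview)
Your proposal is correct and follows essentially the same approach as the paper: the paper simply remarks that the lemma follows ``by the diagram images of generators of $\TL(\ddC_n)$'' (referring to Figure \ref{Cgenerators}), and you have spelled out exactly this verification---that each $\phi(e_i)$ is a $\sigma$-invariant crossing-free diagram, hence lies in $\STL(\ddA_{2n-1})$, and that this property is preserved under products. Your write-up is more detailed than the paper's one-line justification, but the content is the same.
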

\begin{rem}
Let $m\ge1$.  The root system of the Coxeter group $W(\ddA_{m})$ of type
$\ddA_{m}$ is denoted by $\Phi$.  It is realized as $\Phi :=
\{\eps_i-\eps_j\mid 1\le i,j\le m+1,\ i\ne j\}$ in the Euclidean space
$\R^{m+1}$, where $\eps_i$ is the $i^\mathrm{th}$ standard basis vector.  Put $\alp_i
:= \eps_i-\eps_{i+1}$.  Then $\{\alpha_{i}\}_{i=1}^{m}$ is called the set of
simple roots of $\Phi$.  Denote by $\Phi^+$ the set of positive roots in
$\Phi$ with respect to these simple roots; that is, $\Phi^+ := \{\eps_i-\eps_j\mid 1\le i<j\le m+1\}$.\\
An admissible set $B$ corresponds to a Brauer diagram top in the following
way: for each $\b\in B$, where $\b = \eps_i-\eps_j$ for some
$i,j\in\{1,\ldots,m+1\}$ with $i<j$, draw a horizontal strand in the
corresponding Brauer diagram top from the dot $i$ to the dot
$j$ in the top. All horizontal strands on the top are obtained this way, so there
are precisely $|B|$ horizontal strands.\\
We will refer to this action as the {\em admissible set action}.
Alternatively, this action can be described as follows for a monomial $a$:
complete the top corresponding to $B$ into a Brauer diagram $b$, without
increasing the number of horizontal strands in the top. Now $aB$ is the top
of the Brauer diagram $ab$.\\
\end{rem}
Now we will prove the following theorem.
\begin{thm}
The algebra $\RTL(\ddC_n)$ is isomorphic to $\STL(\ddA_{2n-1})$ under $\phi$.
\end{thm}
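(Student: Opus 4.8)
The plan is to show that the homomorphism $\phi\colon\RTL(\ddC_n)\to\Br(\ddC_n)$ of Theorem \ref{4imbeddings}, followed by the identification of $\Br(\ddC_n)$ with a subalgebra of $\Br(\ddA_{2n-1})$ from \cite{CLY2010}, lands inside $\STL(\ddA_{2n-1})$ and is a bijection onto it. By Lemma \ref{Cninto} we already know $\phi(\TL(\ddC_n))\subseteq\STL(\ddA_{2n-1})$; since the notation in the statement calls the algebra $\RTL(\ddC_n)$ but the parameters $\kappa_i$ for type $\ddC_n$ make it the Dieck algebra, I will use Remark \ref{Dieck} to identify $\RTL(\ddC_n)$ with Dieck's diagram algebra, whose rank is $\binom{2n}{n}$ and whose basis consists of the diagram generators of Figure \ref{Cgenerators} together with their products. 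So the theorem reduces to a dimension count: it suffices to prove that $\dim_{\Z[\delta^{\pm1}]}\STL(\ddA_{2n-1})=\binom{2n}{n}$, because $\phi$ is injective (by Remark \ref{Dieck}, the four morphisms in the diagram of Theorem \ref{4imbeddings} are injective) and its image is contained in $\STL(\ddA_{2n-1})$.

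First I would set up the combinatorial model. By the remark preceding the theorem, a basis of $\STL(\ddA_{2n-1})$ is given by the planar (crossingless) Brauer diagrams on $2n+2n$ dots — that is, $\TLM(\ddA_{2n-1})$ diagrams — that are invariant under the reflection $\sigma$ in the middle vertical axis. So I need to count symmetric planar matchings of the $2\times 2n$ array of dots. Next I would classify such a symmetric planar matching by its behaviour near the axis. A strand of a $\sigma$-invariant planar diagram is either (a) carried to another strand by $\sigma$, these coming in mirror pairs, or (b) fixed by $\sigma$, hence crossing the axis; planarity forces the fixed strands to be nested, and there can be at most two of them that are ``through'' strands (one at top, one at bottom) or the axis-crossing horizontal ones are controlled similarly. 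Cutting the diagram along the axis, a symmetric planar diagram on $2\times 2n$ dots is determined by its restriction to one side (a planar partial matching of $2\times n$ dots with some free endpoints on the axis) together with the matching data of the axis endpoints, which must itself be planar and symmetric and hence nested. I expect this to give a clean bijection exhibiting the count as a convolution of Catalan-type numbers summing to $\binom{2n}{n}$; concretely, $\sum_{k}\binom{n}{k}^2 = \binom{2n}{n}$ or the equivalent identity $\sum_k C_k\,(\text{something}) = \binom{2n}{n}$ should drop out, matching Dieck's rank.

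The main obstacle is the careful bookkeeping in that cutting-along-the-axis argument: one must verify that every $\sigma$-invariant planar matching is obtained exactly once, that the ``half-diagram plus axis-data'' really reconstructs a planar (not merely a symmetric) full diagram, and that the resulting count is exactly $\binom{2n}{n}$ rather than something off by lower-order terms. An alternative, perhaps cleaner, route that avoids re-proving the rank is to show directly that $\phi$ is surjective onto $\STL(\ddA_{2n-1})$: the basis diagrams of Figure \ref{Cgenerators} map, under $\phi$ composed with the \cite{CLY2010} embedding, onto the symmetric crossingless generators (the example $\phi(e_1e_0)$ of Figure \ref{fig:e1e0} is the model case), and since every symmetric planar diagram is a product of such symmetric ``elementary'' diagrams — this is essentially the generation statement for the Temperley--Lieb diagram monoid, restricted to the $\sigma$-fixed submonoid — surjectivity follows; combined with injectivity from Remark \ref{Dieck} this gives the isomorphism. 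I would present the surjectivity route as the main line, using the dimension count only as a consistency check against Dieck's $\binom{2n}{n}$.
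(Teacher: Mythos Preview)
Your reduction is right and matches the paper: by Remark \ref{Dieck} the map $\phi$ is injective, by Lemma \ref{Cninto} its image lies in $\STL(\ddA_{2n-1})$, so only surjectivity remains. But both of your proposed routes to surjectivity have gaps.

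The rank count as stated is not enough over $\Z[\delta^{\pm1}]$: an injection of free modules of equal rank need not be onto. This can be repaired, but only by adding the observation you omit: each Dieck basis monomial maps under $\phi$ to a \emph{single} symmetric crossingless diagram times a power of $\delta$ (because the Temperley--Lieb monoid is closed under products up to $\delta$-powers). Then injectivity of $\phi$ forces distinct Dieck monomials to hit distinct diagrams, and the count $\binom{2n}{n}$ of symmetric crossingless diagrams (which you still have to carry out) makes this a bijection of bases. As written, your argument stops at ``same rank'', which is insufficient.

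Your alternative, the direct generation claim that ``every symmetric planar diagram is a product of symmetric elementary diagrams --- this is essentially the generation statement for the Temperley--Lieb monoid restricted to the $\sigma$-fixed submonoid'', is precisely the nontrivial content of the theorem, not a known fact you can invoke. In general the fixed submonoid of a monoid under an involution is \emph{not} generated by the images of the generators; that it happens here is what has to be proved.

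The paper proves surjectivity by a different mechanism. It uses the normal form of Theorem \ref{thm:genralwriting}: every height-$0$ monomial in $\BrM(\ddA_{2n-1})$ is $\delta^k a_B \hat E_{Y_i} a_{B'}^{\rm op}$ with $B,B'$ height-$0$ admissible sets and $a_B,a_{B'}$ of height $0$. After replacing the standard representatives by $\sigma$-symmetric cocliques $Y_i$, surjectivity reduces to the combinatorial claim: for every $\sigma$-invariant height-$0$ admissible set $B$ of size $i$, there exists $a\in\phi(\TLM(\ddC_n))$ with $aB_{Y_i}=B$. This reachability is then proved by induction on $n$, splitting on whether the leftmost point $1$ is matched to an interior point $j<2n$ (case 1) or to $2n$ (case 2), and in each case peeling off an outer layer to reduce to $\ddC_{n-j+1}$, $\ddC_{n-1}$, or $\ddC_{n-2}$. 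So the paper's argument is a structural induction using the admissible-set action, not a global enumeration. Your cutting-along-the-axis idea is closer in spirit to the basis-bijection repair of your first route than to the paper's proof; if you pursue it, make the map from half-diagrams to symmetric full diagrams explicit and check it is a bijection onto the $\STL$ basis, rather than only matching cardinalities.
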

\begin{proof}
By Remark \ref{Dieck} and Lemma \ref{Cninto}, it remains to prove that
the algebra morphism $\phi: \,\RTL(\ddC_n)\rightarrow \STL(\ddA_{2n-1})$ is surjective to accomplish  the proof.
Considering the symmetry of $\sigma$, we modify those $Y$ in the Table \ref{table:admADE} as the following.
For $i\in\{0,\ldots,n\}$ we let $Y_i\in\cA$ be the following set of nodes of
size
$i$:
$$Y_i=\begin{cases}
\{n,{n\pm2},{n\pm4},\ldots,n\pm(i-1)/2\}& \mbox{ if } i\equiv 1\pmod2\\
\{{n\pm1},{n\pm3},\ldots,n\pm i/2\}& \mbox{ if } i\equiv 0\pmod2\\
      \end{cases}
$$
The corresponding set of positive roots $\{\alpha_y\mid y\in Y\}$
is denoted by $B_i$.\\
Let $B_{Y_i}$ be the root sets corresponding to $Y_i$.
It can be seen that each $Y_i$ is of height $0$ in the sense of  Remark \ref{height}.
If we replace those $Y$s in Table \ref{table:admADE} by those $Y_i$s, which are $\sigma$-invariant and of height $0$, by the Theorem \ref{thm:genralwriting}, the proof is reduced to the
the following problem.\\
If we have an admissible set $B$ which are $\sigma$-invariant, of size $i$ and of height $0$, there exists an  element $a\in \phi(\TLM(\ddC_n))$,
such that $aY_i=B$.\\
We prove this fact  by induction. First, it can be easily verified when $n=1$, or $2$.\\
Considering the diagram version for $B$, if there is no horizontal strands having ends $i$ and $2n$(since $B$ is symmetric to the middle axis), then it is totally reduced to the case
$n-1$.
Since $B$ has height $0$, namely the diagram version of $B$ has no intersection, so there exist two possible cases for $B$ displayed in the Figure \ref{2cases}.
\begin{figure}[!htb]
\begin{center}
\includegraphics[width=.5\textwidth,height=.2\textheight]{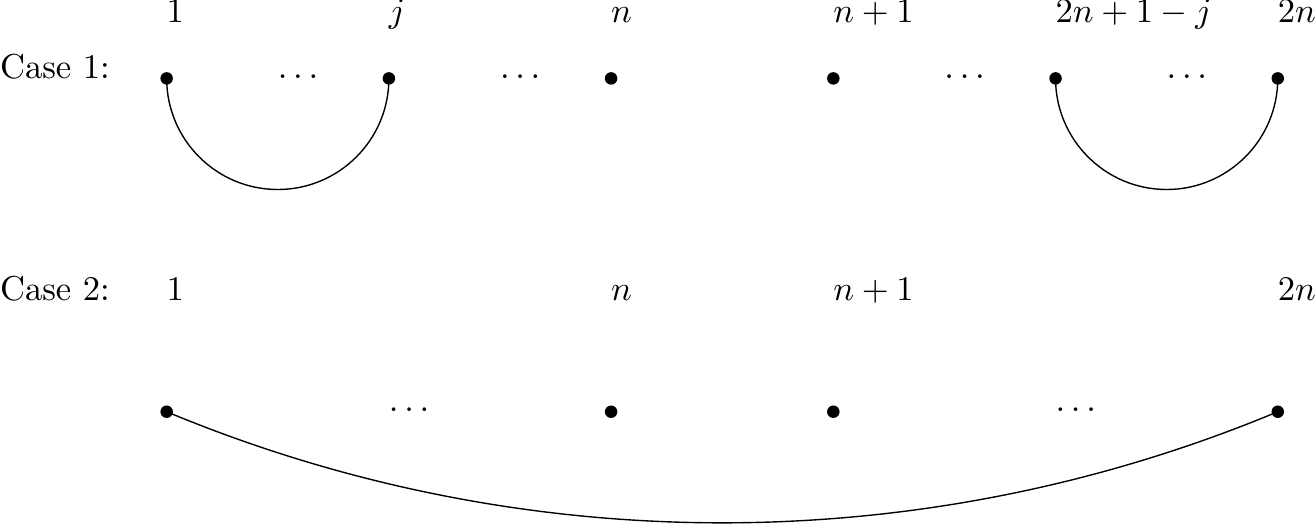}
\caption{ two possible cases}
\label{2cases}
\end{center}
\end{figure}
\\Now we can consider case $1$ in the Figure \ref{2cases}. Since $B$ has height zero,  all the strands having ends in $\{1,2,\dots, j\}$
must be horizontal, and the other ends must be also in $\{1,2,\dots, j\}$.  Then we can see that $j$ must be even. We divide the set $B$ in to two disjoint subsets $B_1$ and $B_2$,
where $B_1$ are those roots having ends in $\{1,2,\dots, j\}$ or $\{2n+1-j,2n+2-j,\dots, 2n\}$, and $B=B_1\coprod B_2$.
 It can be seen that the subalgebra $A$ of $\RTL(\ddC_n)$ generated by $\{e_i\}_{i=1}^{n-1}$ is isomorphic to $\TL(A_{n-1})$. Similarly we do the same division on
 $Y_i$, such that $Y_i=Y_{i,1}\coprod Y_{i,2}$, where $Y_{i,1}$ is the most left $\frac{j}{2}$ dots and the most right $\frac{j}{2}$ dots.
 Similarly,  $B_{Y_i}=B_{Y_{i,1}}\coprod B_{Y_{i,2}}$. By the property of $A$($\TL(\ddA_{n-1})$)(\cite[Proposition4]{CW2011}) and the symmetry to the middle axis, we can find some Temperley-Lieb monomial $a_1$ in $A$ such that
 $\phi(a_1)B_{Y_{i,1}}=B_1$, $\phi(a_1) B_{Y_{i,2}}$ and $B_2$ are lowered to the case of $\RTL(\ddC_{n-j+1})$
 which is generated by $\{e_i\}_{i=0}^{n-j}$. By indution, then we can find one monomial $a_2$ in this $\RTL(\ddC_{n-j+1})$, such that  $\phi(a_2a_1) B_{Y_{i,2}}=B_2$,
 $\phi(a_2) B_1=B_1$. Therefore $\phi(a_2 a_1)B_{Y_i}=B$.\\
 Now we consider the case $2$ in the Figure \ref{2cases}. The set $B$ has no intersection, then $|B|=n$. Let $\alpha=\epsilon_{2n}-\epsilon_1$ be
 the root represented by the
 strands from $1$ to $2n$ and  $\alpha'=\epsilon_{2n-2}-\epsilon_3$ be
 the root represented by the
 strands from $3$ to $2n-2$(Figure \ref{Cn-2}). Let $B'$ be a height $0$ and symmetric to middle axis as displayed in the top of Figure \ref{Cn-2}. By induction on
 the $\RTL(\ddC_{n-2})$ generated by $\{e_i\}_{i=0}^{n-3}$, we can see that there exist a monomial $b_1$ in this $\RTL(\ddC_{n-2})$ such that
 $\phi(b_1)B_{Y_{n-2}}=B'\setminus \{\alpha_1,\alpha_{2n-1}\}$. Therefore $\phi(b_1)B_{Y_n}=B'$. When the element $\phi(e_{n-2})$ acts on $B'$,
 we have $$\phi(e_{n-2})B'=(B'\setminus \{\alpha_1,\alpha_{2n-1},\alpha'\})\cap \{\alpha_2,\alpha_{2n-2},\alpha\}.$$
 Now we can reduce $(\phi(e_{n-2})B')\setminus\{\alpha\}$ and $B\setminus \{\alpha\} $ to  $\RTL(\ddC_{n-1})$ generated by $\{e_i\}_{i=0}^{n-2}$,
 Then we can find a monomial  $b_2$ in this $\RTL(\ddC_{n-1})$ such that $$\phi(b_2)((\phi(e_{n-2})B')\setminus\{\alpha\})=B\setminus \{\alpha\}$$and
 $$\phi(b_2e_{n-2}b_1)B_{Y_n}=B.$$
 \begin{figure}[!htb]
\begin{center}
\includegraphics[width=.5\textwidth,height=.2\textheight]{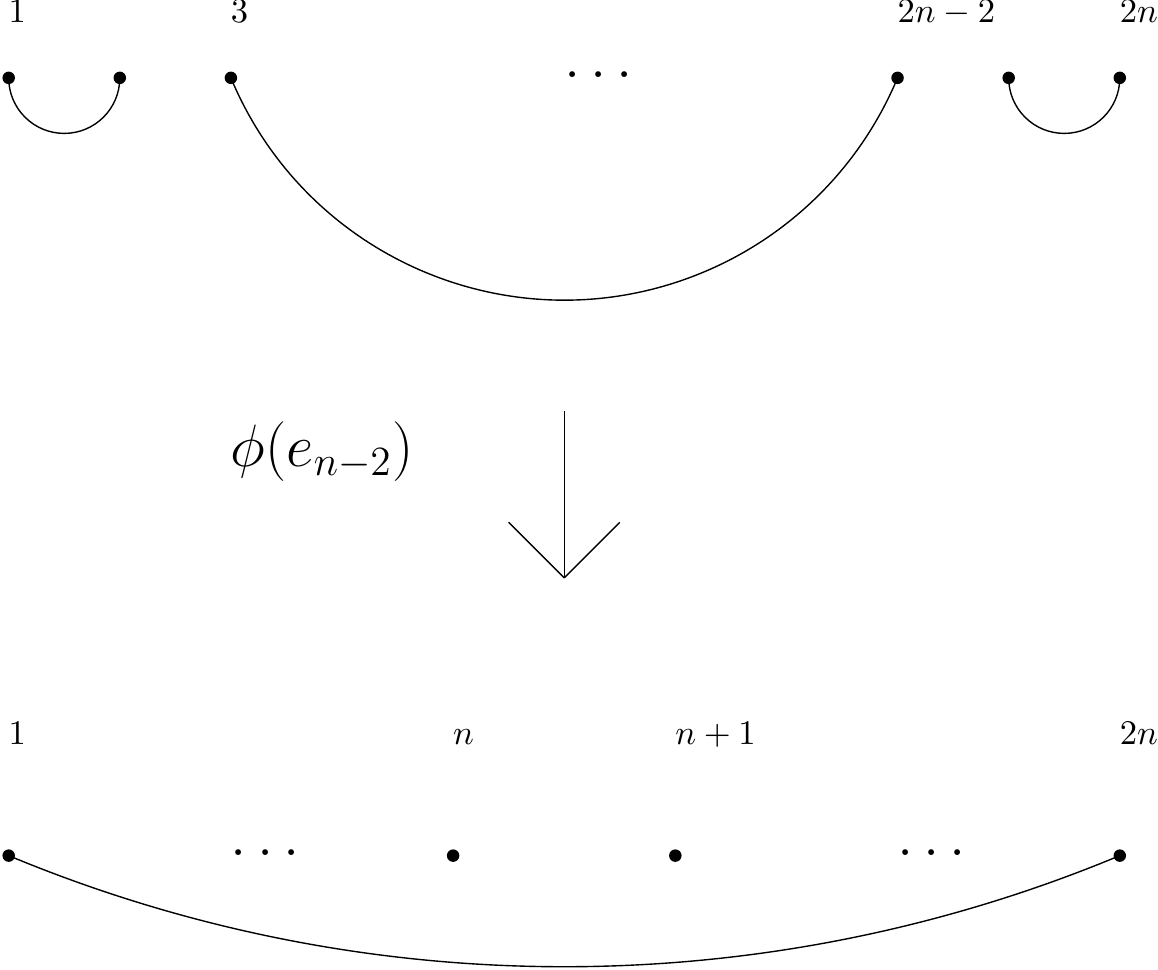}
\caption{From $\ddC_{n-2}$ to $\ddC_{n}$}
\label{Cn-2}
\end{center}
\end{figure}
\end{proof}
\section{The rewriting forms for $\RTL(\ddB_n)$}\label{sect:forms}
First recall the Dynkin diagram of type $\ddB_n$.
$$ \ddB_n\quad = \quad \Bn .$$
In \cite[Remark 6.4]{CL2011}, we have forecasted that the algebra $\RTL(\ddB_n)$ has rank $C_n+C_{n+1}-1$, where $C_n=\frac{1}{n+1}\binom{2n}{n}$, the Catalan number.
Here we will give a precise proof for this claim.
To prove this, we define
\begin{eqnarray*}
\hat{e}_0&=&e_0\\
\hat{e}_1&=&e_1e_0e_1\\
\hat{e}_{i+1}&=&e_{i+1}\hat{e}_ie_{i+1},\quad 1\leq i\leq n-2.
\end{eqnarray*}
\begin{lemma}\label{newrel}
In the algebra $\RTL(\ddB_n)$, we have the following equalities.
\begin{eqnarray}
\hat{e}_i\hat{e}_{i+1}&=&\delta \hat{e}_ie_{i+1}\quad 0\leq i\leq n-2\label{7.1.0}\\
\hat{e}_i\hat{e}_{i+1}&=&\delta e_i\hat{e}_{i+1}\quad 1\leq i\leq n-2\label{7.1.0b}\\
\hat{e}_ie_{i+1}\hat{e}_i&=&\delta\hat{e}_i,\quad 0\leq i\leq n-1\label{7.1.1}\\
\hat{e}_{i+1}e_i\hat{e}_{i+1}&=&\delta \hat{e}_{i+1}\quad 1\leq i\leq n-2\label{7.1.1b}\\
\hat{e}_i^2&=&\delta^2 \hat{e}_i,\quad 0\leq i\leq n-1\label{7.1.2}\\
\hat{e}_i\hat{e}_j&=&\hat{e}_j\hat{e}_i, \quad |i-j|>1\label{7.1.3}\\
\hat{e}_ie_j&=&e_j\hat{e}_i, \quad |i-j|>1\label{7.1.4}\\
\hat{e}_i\hat{e}_j\hat{e}_i&=&\delta^2\hat{e}_i\quad |i-j|=1\label{7.1.5}\\
\hat{e}_i\hat{e}_j&=&\delta e_i\hat{e}_j, \quad |i-j|>1\label{7.1.6}
\end{eqnarray}
\end{lemma}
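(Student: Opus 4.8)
Here is a plan for proving Lemma~\ref{newrel}.

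My plan is to deduce the whole lemma from the defining relations \eqref{1.1}--\eqref{1.4} of $\RTL(\ddB_n)$ by elementary rewriting. First I would isolate a few structural facts about the $\hat{e}_i$ and make systematic use of the anti-involution of Proposition~\ref{prop:opp}, under which each $\hat{e}_i$ is fixed (an immediate induction on $i$), so that each displayed identity can be treated together with its reverse. The facts I would record are: unwinding the recursion, $\hat{e}_i=e_ie_{i-1}\cdots e_1e_0e_1\cdots e_{i-1}e_i$, and more generally $\hat{e}_i=(e_ie_{i-1}\cdots e_{j+1})\,\hat{e}_j\,(e_{j+1}\cdots e_{i-1}e_i)$ for $0\le j\le i$; $\hat{e}_i$ commutes with $e_j$ whenever $j\ge i+2$; the absorption identities $e_ie_{i+1}\hat{e}_i=\hat{e}_i=\hat{e}_ie_{i+1}e_i$ for $1\le i\le n-2$, which come from $e_ie_{i+1}e_i=e_i$ (relation \eqref{1.3}) applied to $\hat{e}_i=e_i\hat{e}_{i-1}e_i$ and which at $i=0$ instead read $e_0e_1\hat{e}_0=\delta\hat{e}_0=\hat{e}_0e_1e_0$ because $e_0e_1e_0=\delta e_0$ by \eqref{1.4}; and, inside the type-$\ddA$ subalgebra on $e_{j+1},\dots,e_i$, the collapse identity $(e_ie_{i-1}\cdots e_{j+2})\,e_{j+1}\,(e_{j+2}\cdots e_{i-1}e_i)=e_i$, obtained by iterating $e_ke_{k-1}e_k=e_k$.

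The keystone is \eqref{7.1.1}, $\hat{e}_ie_{i+1}\hat{e}_i=\delta\hat{e}_i$, which I would prove by induction on $i$: the base case $i=0$ is exactly $e_0e_1e_0=\delta e_0$, and in the step one writes $\hat{e}_{i+1}e_{i+2}\hat{e}_{i+1}=e_{i+1}\hat{e}_i(e_{i+1}e_{i+2}e_{i+1})\hat{e}_ie_{i+1}$, collapses the bracketed factor to $e_{i+1}$, and applies the inductive hypothesis. Once \eqref{7.1.1} is available, each remaining relation with indices $i$ and $i\pm1$ drops out in a line or two from it and the structural facts: for example \eqref{7.1.0} reads $\hat{e}_i\hat{e}_{i+1}=\hat{e}_ie_{i+1}\hat{e}_ie_{i+1}=(\hat{e}_ie_{i+1}\hat{e}_i)e_{i+1}=\delta\hat{e}_ie_{i+1}$; \eqref{7.1.0b} then follows from $e_i\hat{e}_{i+1}=e_ie_{i+1}\hat{e}_ie_{i+1}=\hat{e}_ie_{i+1}$ (absorption); \eqref{7.1.1b} and \eqref{7.1.5} are similar manipulations built around \eqref{7.1.1} (and, for \eqref{7.1.5}, its reverse and $\hat{e}_i=e_i\hat{e}_{i-1}e_i$); and \eqref{7.1.2}, $\hat{e}_i^2=\delta^2\hat{e}_i$, goes by its own short induction, $\hat{e}_{i+1}^2=e_{i+1}\hat{e}_i\,e_{i+1}^2\,\hat{e}_ie_{i+1}=\delta\,e_{i+1}(\hat{e}_ie_{i+1}\hat{e}_i)e_{i+1}=\delta^2\hat{e}_{i+1}$, with base case $\hat{e}_0^2=e_0^2=\delta^2e_0$ (where $\kappa_0=2$).

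What remains --- the ``distant'' relations \eqref{7.1.3}, \eqref{7.1.4}, \eqref{7.1.6} --- is where the real bookkeeping lies. Using the anti-involution I would reduce \eqref{7.1.4} and \eqref{7.1.6} to the case $i>j$ (so $i\ge j+2$), after which \eqref{7.1.3} is immediate. For $i\ge j+2$, expand $\hat{e}_i=(e_i\cdots e_{j+1})\hat{e}_j(e_{j+1}\cdots e_i)$ and push the rightmost piece of the product leftward through the block $e_{j+2}\cdots e_i$, which commutes with it, until it meets a flanking $e_{j+1}$: in $\hat{e}_ie_j$ that piece is the trailing $e_j$, which gets absorbed via $\hat{e}_je_{j+1}e_j=\hat{e}_j$ (or, when $j=0$, via $e_0e_1e_0=\delta e_0$, which is where a stray $\delta$ enters); in $\hat{e}_i\hat{e}_j$ it is the leading $e_j$ of $\hat{e}_j$, and the factor $\hat{e}_je_{j+1}\hat{e}_j$ becomes $\delta\hat{e}_j$ by \eqref{7.1.1}. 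In either case one then commutes the leftover block $e_{j+2}\cdots e_i$ past $\hat{e}_j$ and applies the collapse identity, ending at $\hat{e}_ie_j=e_i\hat{e}_j$ (respectively $\delta e_ie_0$ when $j=0$) for \eqref{7.1.4} and at $\hat{e}_i\hat{e}_j=\delta e_i\hat{e}_j$ for \eqref{7.1.6}; the mirror reduction applied to $e_j\hat{e}_i$ gives the same element, so $\hat{e}_ie_j=e_j\hat{e}_i$, which is \eqref{7.1.4}. For \eqref{7.1.3}, the anti-involution and \eqref{7.1.4} give $\hat{e}_j\hat{e}_i=(\hat{e}_i\hat{e}_j)^{\op}=(\delta e_i\hat{e}_j)^{\op}=\delta\hat{e}_je_i=\delta e_i\hat{e}_j=\hat{e}_i\hat{e}_j$. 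The main obstacle will be this last paragraph: one has to carry the long words produced by the expansion exactly and, because at the short root $e_0e_1e_0=\delta e_0$ rather than $e_0$, track the stray powers of $\delta$ at index $j=0$ with care --- which is also what pins down the ordering of $i$ and $j$ in which \eqref{7.1.6} is to be read, its left-handed companion $\hat{e}_i\hat{e}_j=\delta\hat{e}_ie_j$ (for $i<j$) being the anti-involution image.
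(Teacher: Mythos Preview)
Your proposal is correct and follows essentially the same approach as the paper: both arguments are elementary inductive rewritings from the defining relations, driven by the recursion $\hat e_i=e_i\hat e_{i-1}e_i$ together with the anti-involution of Proposition~\ref{prop:opp}. The only differences are organisational: you single out \eqref{7.1.1} as the keystone and derive \eqref{7.1.0} from it (the paper proves \eqref{7.1.0} by its own induction first), and for the distant relations you go via the expansion $\hat e_i=(e_i\cdots e_{j+1})\hat e_j(e_{j+1}\cdots e_i)$ and the collapse identity, whereas the paper first handles $|i-j|=2$ and then inducts --- your closing remark that \eqref{7.1.6} is really an $i>j$ statement (with the $i<j$ companion being $\hat e_i\hat e_j=\delta\hat e_ie_j$) is in fact more precise than the paper, whose proof of the case $j>i$ invokes \eqref{7.1.0b} and so tacitly needs $i\ge1$.
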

\begin{proof}
For (\ref{7.1.0}), it can be verified easily when $i=0$,  and we have
\begin{eqnarray*}
\hat{e}_i\hat{e}_{i+1}
&=&e_i\hat{e}_{i-1}(e_ie_{i+1}e_i)\hat{e}_{i-1}e_ie_{i+1}\\
&\overset{(\ref{1.3})}{=}&e_i\hat{e}_{i-1}(e_i\hat{e}_{i-1}e_i)e_{i+1}=e_i\hat{e}_{i-1}\hat{e}_ie_{i+1}\\
&\overset{induction}{=}&\delta e_i\hat{e}_{i-1}e_ie_{i+1}=\delta \hat{e}_ie_{i+1}.
\end{eqnarray*}
For (\ref{7.1.0b}), we have
\begin{eqnarray*}
e_{i}\hat{e}_{i+1}&=&(e_{i}e_{i+1}e_i)\hat{e}_{i-1}e_ie_{i+1}\\
&\overset{(\ref{1.3})}{=}&e_i\hat{e}_{i-1}e_ie_{i+1}=\hat{e}_ie_{i+1}\overset{(\ref{7.1.0})}{=}\delta^{-1}\hat{e}_i\hat{e}_{i+1}.
\end{eqnarray*}
For (\ref{7.1.1}), by (\ref{1.4}), it is true when $i=0$, and we have
\begin{eqnarray*}
\hat{e}_ie_{i+1}\hat{e}_i&=&e_i\hat{e}_{i-1}(e_ie_{i+1}e_i)\hat{e}_{i-1}e_i\\
&\overset{(\ref{1.3})}{=}&e_i\hat{e}_{i-1}e_i\hat{e}_{i-1}e_i\\
&\overset{induction}{=}&\delta e_i\hat{e}_{i-1}e_i=\delta \hat{e}_i.
\end{eqnarray*}
For (\ref{7.1.1b}), we have
\begin{eqnarray*}
\hat{e}_{i+1}e_i\hat{e}_{i+1}&=& e_{i+1}\hat{e}_i(e_{i+1}e_ie_{i+1})\hat{e}_ie_{i+1}\\
&\overset{(\ref{1.3})}{=}&e_{i+1}(\hat{e}_ie_{i+1}\hat{e}_i)e_{i+1}\overset{(\ref{7.1.1})}{=}\delta e_{i+1}\hat{e}_i e_{i+1}=\delta \hat{e}_{i+1}.
\end{eqnarray*}
For (\ref{7.1.2}), we have
\begin{eqnarray*}
\hat{e}_i^2&=&e_i\hat{e}_{i-1}(e_ie_i)\hat{e}_{i-1}e_i\\
&\overset{(\ref{1.1})}{=}&\delta e_i\hat{e}_{i-1}e_i\hat{e}_{i-1}e_i\\
&\overset{(\ref{7.1.1})}{=}&\delta^2 e_i\hat{e}_{i-1}e_i=\delta^2 \hat{e}_i.\\
\end{eqnarray*}

For (\ref{7.1.3}), we first consider $\hat{e}_i\hat{e}_{i+2}$. Then we have
\begin{eqnarray*}
\hat{e}_i\hat{e}_{i+2}&=&(\hat{e}_ie_{i+2})e_{i+1}\hat{e}_ie_{i+1}e_{i+2}\\
&\overset{(\ref{1.2})}{=}&e_{i+2}(\hat{e}_ie_{i+1}\hat{e}_i)e_{i+1}e_{i+2}\\
&\overset{(\ref{7.1.1})}{=}&\delta e_{i+2}\hat{e}_ie_{i+1}e_{i+2}\\
&\overset{(\ref{1.2})}{=}&\delta e_{i+2}\hat{e}_i(e_{i+2}e_{i+1}e_{i+2})\overset{(\ref{1.3})}{=}\delta\hat{e}_ie_{i+2},
\end{eqnarray*}
and we can obtain the following by Proposition \ref{prop:opp},
$$\hat{e}_{i+2}\hat{e}_{i}=\delta e_{i+2}\hat{e}_i=\delta\hat{e}_ie_{i+2}=\hat{e}_i\hat{e}_{i+2}.$$
Then for the general case of (\ref{7.1.3}), it can be verified by induction and Proposition \ref{prop:opp}.
\\The formula (\ref{7.1.4}) follows from the above proof of (\ref{7.1.3}).
\\ The formula (\ref{7.1.5}) follows from (\ref{7.1.0}),(\ref{7.1.0b}),(\ref{7.1.1}), and(\ref{7.1.1b}).\\
For (\ref{7.1.6}), if $j-i>1$, we have
\begin{eqnarray*}
e_i\hat{e}_j&=&e_j\dots e_{i+2}e_i\hat{e}_{i+1}e_{i+2}\dots e_{j}\\
            &\overset{(\ref{7.1.0b})}{=}&\delta^{-1}e_j\dots e_{i+2}\hat{e}_i\hat{e}_{i+1}e_{i+2}\dots e_{j}\\
            &\overset{(\ref{7.1.4})+\ref{prop:opp}}{=}&\delta^{-1}\hat{e}_i\hat{e}_j;
\end{eqnarray*}
if $i-j>1$, we have
\begin{eqnarray*}
\hat{e}_i\hat{e}_j&\overset{(\ref{7.1.4})}{=}&e_i\dots e_{j+2}(\hat{e}_{j+1}\hat{e}_{j})e_{j+2}\dots e_i\\
                   &\overset{(\ref{7.1.0})+\ref{prop:opp}}{=}&\delta e_i\dots e_{j+2}e_{j+1}\hat{e}_{j}e_{j+2}\dots e_i\\
                   &\overset{(\ref{7.1.4})}{=}&\delta e_i\dots e_{j+2}e_{j+1}e_{j+2}\dots e_i \hat{e}_{j}\\
                   &\overset{(\ref{1.3})}{=}& \delta e_i\hat{e}_{j}.
\end{eqnarray*}

\end{proof}
Let $\psi$ be the algebra morphism from $\RTL(\ddB_n)$ to $\TL(\ddD_{n+1})$.
\begin{lemma} \label{Y1}
The subalgebra $\mathcal{Y}_1$ of $\RTL(\ddB_n)$ generated by $\{e_i\}_{i=1}^{n-1}$ is isomorphic to $\TL(\ddA_{n-1})$.
Then $\mathcal{Y}_1$ has rank $C_n$ over the ground ring.
\end{lemma}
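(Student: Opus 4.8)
The plan is to realise $\mathcal{Y}_1$ simultaneously as a quotient of $\TL(\ddA_{n-1})$ and, via the morphism $\psi$, as a copy of $\TL(\ddA_{n-1})$, and then to play the two against each other. For the quotient: the nodes $1,\dots,n-1$ of the Dynkin diagram of $\ddB_n$ form a path, i.e.\ a type $\ddA_{n-1}$ subdiagram, none of them is adjacent to the special node $0$, and $\kappa_i=1$ for $1\le i\le n-1$. So the only relations that Definition \ref{0.1} imposes among $e_1,\dots,e_{n-1}$ are $e_i^2=\delta e_i$, $e_ie_je_i=e_i$ for $|i-j|=1$, and $e_ie_j=e_je_i$ for $|i-j|>1$, which are precisely the defining relations of $\TL(\ddA_{n-1})$. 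Hence there is a surjective $\Z[\delta^{\pm1}]$-algebra homomorphism $\pi\colon\TL(\ddA_{n-1})\twoheadrightarrow\mathcal{Y}_1$ sending the standard generators to $e_1,\dots,e_{n-1}$. As $\TL(\ddA_{n-1})$ is free with basis its height $0$ monomials (recalled after Definition \ref{0.1}), of which there are $C_n$ (the classical count for the Temperley--Lieb algebra on $n$ strands), $\mathcal{Y}_1$ has rank at most $C_n$, and it remains only to show $\pi$ is injective.

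For injectivity I would transport everything into $\TL(\ddD_{n+1})$ through the morphism $\psi$ of Theorem \ref{4imbeddings}. By the way $\psi$ is constructed from the folding $\ddD_{n+1}\to\ddB_n$ together with the embedding $\Br(\ddB_n)\hookrightarrow\Br(\ddD_{n+1})$ of \cite{CL2011}, for $1\le i\le n-1$ the generator $e_i$ is sent to the Temperley--Lieb generator of $\TL(\ddD_{n+1})\subseteq\Br(\ddD_{n+1})$ attached to a node of a type $\ddA_{n-1}$ parabolic subdiagram of $\ddD_{n+1}$ (the spine avoiding the two fork tips), while $e_0$ goes to the product of the two fork-tip generators. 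Thus $\psi(\mathcal{Y}_1)$ is the subalgebra of $\Br(\ddD_{n+1})$ generated by those $n-1$ Temperley--Lieb generators. The crux is that this subalgebra is free of rank $C_n$ over $\Z[\delta^{\pm1}]$: by the basis theorem for simply-laced Brauer algebras (Proposition \ref{ADErewform}, or Theorem \ref{thm:genralwriting}) the basis monomials $\delta^{k}u E_X z v$ of $\Br(\ddD_{n+1})$ lying in the subalgebra generated by the Coxeter and Temperley--Lieb generators of that sub-path are exactly those with $X,u,v,z$ all supported on the sub-path, and these correspond bijectively, under the sub-path inclusion, to the standard basis of $\Br(\ddA_{n-1})$; restricting to height $0$ identifies $\psi(\mathcal{Y}_1)$ with $\TL(\ddA_{n-1})$, of rank $C_n$. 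Then $\psi\circ\pi\colon\TL(\ddA_{n-1})\to\psi(\mathcal{Y}_1)$ is a surjection between free $\Z[\delta^{\pm1}]$-modules of equal rank $C_n$, hence an isomorphism (equivalently, extend scalars to $\Q(\delta)$ and compare dimensions); so $\pi$ is injective, hence an isomorphism, and $\mathcal{Y}_1\cong\TL(\ddA_{n-1})$ is free of rank $C_n$.

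The one genuinely delicate point is the claim that this rank-$C_n$ parabolic sub-path does not collapse inside $\TL(\ddD_{n+1})$, i.e.\ that no relation beyond the $\ddA_{n-1}$ ones is forced on those generators; the argument above reduces it to the known basis description of $\Br(\ddD_{n+1})$, but one could instead argue directly by realising the classical Kauffman diagram representation of $\TL(\ddA_{n-1})$ (on the $C_n$ non-crossing perfect matchings) as a subrepresentation of a standard module of $\TL(\ddD_{n+1})$ on which the sub-path generators act as Temperley--Lieb diagrams, which gives faithfulness of $\psi\circ\pi$ directly. Everything else is bookkeeping with the Coxeter diagrams and a routine scalar extension.
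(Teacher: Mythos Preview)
Your proof is correct and follows essentially the same route as the paper: construct the surjection $\pi\colon\TL(\ddA_{n-1})\twoheadrightarrow\mathcal{Y}_1$, compose with $\psi$ into $\TL(\ddD_{n+1})$, and observe that the image---the subalgebra generated by $E_3,\dots,E_{n+1}$---is itself a copy of $\TL(\ddA_{n-1})$, forcing $\pi$ to be injective. The only cosmetic difference is that the paper invokes \cite[Proposition~4]{CW2011} directly for the parabolic-subalgebra identification, whereas you extract it from the basis description of $\Br(\ddD_{n+1})$ restricted to height~$0$; your rank-comparison argument for injectivity is also made more explicit than the paper's terse conclusion.
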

\begin{proof}By easy verification,  there is a surjective algebra morphism $\psi'$ from $\TL(\ddA_{n-1})$  to $\mathcal{Y}_1$.
By $\psi$, we see that $\psi(e_i)=E_{i+2}$, for $1\leq i\leq n-1$, which are Temperley-Lieb generators of  $\TL(\ddD_{n+1})$.
By \cite[Proposition 4]{CW2011}, the subalgebra of $\TL(\ddD_{n+1})$generated by $\{E_{i+2}\}_{i=1}^{n-1}$  is isomorphic to $\TL(\ddA_{n-1})$. Now we have
$$\TL(\ddA_{n-1})\overset{\psi'}{\rightarrow}\mathcal{Y}_1\overset{\psi}{\rightarrow}\TL(\ddD_{n+1}),$$
then $\TL(\ddA_{n-1})\simeq\mathcal{Y}_1.$

\end{proof}
\begin{lemma}\label{Y2}
 The subalgebra $\mathcal{Y}_2$ of $\RTL(\ddB_n)$ generated by $\{\hat{e}_i\}_{i=0}^{n-1}$ can be spanned by $C_{n+1}$ monomials.
\end{lemma}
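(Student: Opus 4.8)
The plan is to recognise $\mathcal{Y}_2$, after a harmless rescaling of its generators, as a homomorphic image of the Temperley--Lieb algebra of type $\ddA_n$, and then to read off the bound from the known rank of $\TL(\ddA_n)$.

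First I would set $\tilde e_i=\delta^{-1}\hat e_i$ for $0\le i\le n-1$. Since $\delta$ is invertible, $\mathcal{Y}_2$ is equally the subalgebra of $\RTL(\ddB_n)$ generated by $\{\tilde e_i\}_{i=0}^{n-1}$. Next I would extract from Lemma~\ref{newrel} precisely the three relations $(\ref{7.1.2})$, $(\ref{7.1.3})$ and $(\ref{7.1.5})$ and rewrite them for the $\tilde e_i$: from $\hat e_i^2=\delta^2\hat e_i$ one gets $\tilde e_i^2=\delta\tilde e_i$; from $\hat e_i\hat e_j=\hat e_j\hat e_i$ for $|i-j|>1$ one gets $\tilde e_i\tilde e_j=\tilde e_j\tilde e_i$; and from $\hat e_i\hat e_j\hat e_i=\delta^2\hat e_i$ for $|i-j|=1$ one gets $\tilde e_i\tilde e_j\tilde e_i=\tilde e_i$. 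These are exactly the defining relations $(\ref{1.1})$--$(\ref{1.3})$ of $\TL(\ddA_n)$ (with all $\kappa_i=1$) for the graph on nodes $0,1,\ldots,n-1$ with an edge between consecutive indices, which is a Coxeter diagram of type $\ddA_n$.

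Consequently the assignment $e_i\mapsto\tilde e_i$ extends, by the universal property of the presentation of $\TL(\ddA_n)$, to a unital algebra homomorphism $\TL(\ddA_n)\to\RTL(\ddB_n)$ whose image is precisely $\mathcal{Y}_2$. Hence the images of a $\Z[\delta^{\pm1}]$-basis of $\TL(\ddA_n)$ span $\mathcal{Y}_2$; since $\TL(\ddA_n)$ is free of rank $C_{n+1}$ over $\Z[\delta^{\pm1}]$ (the classical fact recovered, e.g., by Fan's diagram basis \cite{F1997} or by the height-$0$ monomial basis already used above \cite{CW2011}), it follows that $\mathcal{Y}_2$ is spanned by $C_{n+1}$ monomials in the $\tilde e_i$, equivalently, up to powers of $\delta$, by $C_{n+1}$ monomials in the $\hat e_i$.

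This argument is essentially bookkeeping, and I do not anticipate a genuine obstacle. The only point that wants care is the loop-parameter rescaling: both the quasi-idempotent relation $\hat e_i^2=\delta^2\hat e_i$ and the braid-like relation $\hat e_i\hat e_j\hat e_i=\delta^2\hat e_i$ carry a factor $\delta^2$ rather than $\delta$, and one must check that the single substitution $\hat e_i=\delta\,\tilde e_i$ normalises both at once to the shapes of $(\ref{1.1})$ and $(\ref{1.3})$ --- which it does, the former being quadratic and the latter cubic in the generators, so that the discrepancies $\delta^{-2}$ and $\delta^{-3}$ cancel against $\delta^2$ to leave the standard relations.
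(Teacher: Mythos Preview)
Your proposal is correct and follows essentially the same approach as the paper: both recognise that the $\hat e_i$ satisfy Temperley--Lieb relations of type $\ddA_n$ up to powers of $\delta$, and then transport the $C_{n+1}$ reduced monomials of $\TL(\ddA_n)$ to a spanning set for $\mathcal{Y}_2$. Your rescaling $\tilde e_i=\delta^{-1}\hat e_i$ is a cleaner and more rigorous way of handling the extra $\delta$-factors than the paper's somewhat informal ``define $\delta=1$'', since it yields an honest surjective $\Z[\delta^{\pm1}]$-algebra homomorphism $\TL(\ddA_n)\twoheadrightarrow\mathcal{Y}_2$ rather than merely a combinatorial rewriting argument.
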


\begin{proof}We write down the relations about the generators $\{\hat{e}_i\}_{i=0}^{n-1}$ from Lemma \ref{newrel} below,
\begin{eqnarray*}
\hat{e}_ie_{i+1}\hat{e}_i&=&\delta\hat{e}_i,\quad |i-j|=1,\\
\hat{e}_i\hat{e}_j&=&\hat{e}_j\hat{e}_i, \quad |i-j|>1,\\
\hat{e}_i\hat{e}_j\hat{e}_i&=&\delta^2\hat{e}_i\quad |i-j|=1.
\end{eqnarray*}
If we define $\delta=1$,  we can define a surjective algebra morphism  from $\TL(\ddA_n)$ to algebra $\mathcal{Y}_2$.
Then by the monomial reduction of $\TL(\ddA_n)$, the algebra $\mathcal{Y}_2$ can have the same spanning elements as the  reduced monomials of $\TL(\ddA_n)$ with
generators $E_1$, $\dots$, $E_n$
replaced by $e_0$,$\dots$, $e_{n-1}$, respectively. Therefore  $\mathcal{Y}_2$ can be spanned by $C_{n+1}$ monomials, which is the rank of $\TL(\ddA_n)$.
\end{proof}
\begin{prop}\label{rank1}
It follows that
$$\RTL(\ddB_n)=\mathcal{Y}_1+\mathcal{Y}_2.$$
Then $\RTL(\ddB_n)$ can be spanned by $C_n+C_{n+1}-1$ elements.
\end{prop}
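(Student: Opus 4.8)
The plan is to prove that the $R$-submodule $\mathcal{Y}_1+\mathcal{Y}_2$ of $\RTL(\ddB_n)$ is in fact a subalgebra. Since it contains $1\in\mathcal{Y}_1$ and it contains every generator of $\RTL(\ddB_n)$ (indeed $e_0=\hat{e}_0\in\mathcal{Y}_2$, while $e_i\in\mathcal{Y}_1$ for $1\le i\le n-1$), it must then coincide with $\RTL(\ddB_n)$, and the asserted spanning bound will drop out of Lemmas \ref{Y1} and \ref{Y2}.

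Let $\mathcal{Y}_2^{+}$ denote the non-unital subalgebra of $\mathcal{Y}_2$ spanned by the monomials $\hat{e}_{i_1}\cdots\hat{e}_{i_k}$ with $k\ge 1$, so that $\mathcal{Y}_2=R\cdot 1+\mathcal{Y}_2^{+}$. The technical core is the assertion
$$e_j\hat{e}_i\in\mathcal{Y}_2^{+}\quad\text{and}\quad \hat{e}_ie_j\in\mathcal{Y}_2^{+}\qquad\text{for}\ 0\le i\le n-1,\ 1\le j\le n-1.$$
Each of these is a one-line consequence of Lemma \ref{newrel} together with the anti-involution of Proposition \ref{prop:opp}, organized by the value of $|i-j|$: if $|i-j|>1$ use (\ref{7.1.6}) (with the first index taken to be $j$) and (\ref{7.1.4}); if $j=i$ use $\hat{e}_i=e_i\hat{e}_{i-1}e_i$ and $e_i^{2}=\delta e_i$ (recall $\kappa_i=1$ here); if $j=i+1$ use (\ref{7.1.0}) and its opposite; if $j=i-1$ use (\ref{7.1.0b}) and its opposite. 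Since $\mathcal{Y}_2^{+}$ is closed under multiplication by the $\hat{e}_k$ on both sides, absorbing the generators $e_j$ one at a time upgrades this to $e_j\mathcal{Y}_2^{+}\subseteq\mathcal{Y}_2^{+}$ and $\mathcal{Y}_2^{+}e_j\subseteq\mathcal{Y}_2^{+}$ for $1\le j\le n-1$, and hence to $\mathcal{Y}_1\mathcal{Y}_2^{+}\subseteq\mathcal{Y}_2^{+}$ and $\mathcal{Y}_2^{+}\mathcal{Y}_1\subseteq\mathcal{Y}_2^{+}$.

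Now closure of $\mathcal{Y}_1+\mathcal{Y}_2$ follows formally. Clearly $\mathcal{Y}_1\mathcal{Y}_1\subseteq\mathcal{Y}_1$ and $\mathcal{Y}_2\mathcal{Y}_2\subseteq\mathcal{Y}_2$; moreover
$$\mathcal{Y}_1\mathcal{Y}_2=\mathcal{Y}_1\bigl(R\cdot1+\mathcal{Y}_2^{+}\bigr)=\mathcal{Y}_1+\mathcal{Y}_1\mathcal{Y}_2^{+}\subseteq\mathcal{Y}_1+\mathcal{Y}_2^{+}\subseteq\mathcal{Y}_1+\mathcal{Y}_2,$$
and applying the anti-involution of Proposition \ref{prop:opp} (which fixes every $e_i$ and every $\hat{e}_i$, hence stabilizes $\mathcal{Y}_1$, $\mathcal{Y}_2$ and their sum) gives $\mathcal{Y}_2\mathcal{Y}_1\subseteq\mathcal{Y}_1+\mathcal{Y}_2$ as well. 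So $\mathcal{Y}_1+\mathcal{Y}_2$ is a unital subalgebra of $\RTL(\ddB_n)$ containing all generators, whence $\RTL(\ddB_n)=\mathcal{Y}_1+\mathcal{Y}_2$. For the count, Lemma \ref{Y1} exhibits a spanning set of $\mathcal{Y}_1$ with $C_n$ elements and Lemma \ref{Y2} one of $\mathcal{Y}_2$ with $C_{n+1}$ elements, and the unit $1$ belongs to both; their union therefore spans $\RTL(\ddB_n)$ with at most $C_n+C_{n+1}-1$ elements.

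The one place that needs care — a matter of bookkeeping rather than a genuine obstruction — is the unit: $\mathcal{Y}_2$ is not a one-sided ideal of $\RTL(\ddB_n)$ (in general $e_j\notin\mathcal{Y}_2$), so one cannot conclude $\mathcal{Y}_1\mathcal{Y}_2\subseteq\mathcal{Y}_2$, only $\mathcal{Y}_1\mathcal{Y}_2\subseteq\mathcal{Y}_1+\mathcal{Y}_2$; this is precisely why the argument is routed through the augmentation part $\mathcal{Y}_2^{+}$. Beyond that, the whole proof is a formal unwinding of the identities collected in Lemma \ref{newrel}.
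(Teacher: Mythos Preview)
Your proof is correct and follows essentially the same strategy as the paper: show $\mathcal{Y}_1+\mathcal{Y}_2$ is closed under multiplication by reducing (via the anti-involution) to products $e_j\hat{e}_i$, and handle those case-by-case using the identities of Lemma~\ref{newrel}. Your version is in fact more careful than the paper's in two respects. First, the paper asserts $y_1y_2\in\mathcal{Y}_2$ for all $y_1\in\mathcal{Y}_1$, $y_2\in\mathcal{Y}_2$, which is literally false when $y_2=1$; your detour through the augmentation part $\mathcal{Y}_2^{+}$ fixes this cleanly. Second, the paper's list of relations ((\ref{7.1.0}), (\ref{7.1.0b}), (\ref{7.1.6})) omits the diagonal case $j=i$, which you cover explicitly via $\hat{e}_i=e_i\hat{e}_{i-1}e_i$ and $e_i^2=\delta e_i$.
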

\begin{proof}Because the generators of $\RTL(\ddB_n)$ are in $\mathcal{Y}_1+\mathcal{Y}_2$, we just need to prove that
$\mathcal{Y}_1+\mathcal{Y}_2$ are closed under multiplication. Since  $\mathcal{Y}_1$ and $\mathcal{Y}_2$ are algebras
and we have the natural involution in    Proposition \ref{prop:opp}, it remains to prove that  $y_1y_2\in \mathcal{Y}_1+\mathcal{Y}_2$ for $y_1\in \mathcal{Y}_1$
and $y_2\in \mathcal{Y}_2$. We claim that $y_1y_2\in \mathcal{Y}_2$ for  any $y_1\in \mathcal{Y}_1$
and $y_2\in \mathcal{Y}_2$.By induction, it can be reduced to to $e_i\hat{e}_{j}\in \mathcal{Y}_2$. Therefore this holds for
(\ref{7.1.0}),(\ref{7.1.0b}),(\ref{7.1.6}) and Proposition \ref{prop:opp}.
\end{proof}
\section{The Rank of $\RTL(\ddB_n)$}\label{sect:rank}
In this section, we try to prove that the rank of $\RTL(\ddB_n)$ is exactly $C_n+C_{n+1}-1$. To prove this, we mainly use the diagram representation
of Brauer algebra of type $\ddD_{n+1}$ from \cite{CGW2009}. Now we recall the diagram representations here.
Divide $2n+2$ points into two sets $\{1,
2,\ldots, n+1\}$ and $\{\hat{1}, \hat{2}, \ldots, \widehat{n+1}\}$ of points
in the (real) plane with each set on a horizontal line and point $i$ above
$\hat{i}$. An $n+1$-\emph{connector} is a partition on $2n+2$ points into
$n+1$ disjoint pairs. It is indicated in the plane by a (piecewise linear)
curve, called \emph{strand} from one point of the pair to the other. A
\emph{decorated} $n+1$-\emph{connector} is an $n+1$-connector in which an
even number of pairs are labeled $1$, and all other pairs are labeled by
$0$. A pair labeled $1$ will be called \emph{decorated}. The decoration of a
pair is represented by a black dot on the corresponding strand.

\begin{rem}
\label{not:BrMDD}
Denote
$T_{n+1}$ the set of all decorated $n+1$-connectors. Denote $T_{n+1}^0$ the
subset of $T_{n+1}$ of decorated $n+1$-connectors without decorations and
denote $T_{n+1}^=$ the subset of $T_{n+1}$ of decorated $n+1$-connectors
with at least one horizontal strand.

Let $H$ be the commutative monoid with presentation
$$H=\left<\delta^{\pm 1}, \xi, \theta\mid \xi^2=\delta^2, \xi
\theta=\delta\theta, \theta^2=\delta^2\theta\right> =\left<\delta^{\pm
1}\right>\{1, \xi, \theta \}.$$ A \emph{Brauer diagram} of type $\ddD_{n+1}$
is the scalar multiple of a decorated $n$-connector by an element of $H$
belonging to $\left<\delta^{\pm 1}\right>(T_{n+1}\cup \xi T_{n+1}^=\cup
\theta (T_{n+1}^{0}\cap T_{n+1}^=))$.  The \emph{Brauer diagram algebra of
type} $\ddD_{n+1}$, denoted $\BrD(\ddD_{n+1})$, is the
$\Z[\delta^{\pm1}]$-linear span of all Brauer diagrams of type $\ddD_{n+1}$
with multiplication laws defined in \cite[Definition 4.4]{CGW2009}. The
corresponding monoid is denoted
$\BrMD(\ddD_{n+1})$.
\end{rem}

\np
The
scalar $\xi\delta^{-1}$ appears in various products of $n+1$-connectors
described in \cite[Definition 4.4]{CGW2009} and two consecutive black dots
on a strand are removed. Also, the
scalar $\theta\delta^{-1}$ appears in various products of $n+1$-connectors
in which a dotted circle appears, as described in \cite[Figure 16]{CGW2009}.
The multiplication is an intricate variation of
the multiplication in classical Brauer diagrams, where the points of the
bottom of one connector are joined to the points of the top of the other
connector, so as to obtain a new connector. In this process, closed strands
appear which are turned into scalars by translating them into elements of
$H$ as indicated in Figure \ref{ch5closedloop}.\\

\begin{figure}[htbp]
\unitlength 1mm
\begin{picture}(26,15)(0,0)
\linethickness{0.3mm}
\put(10,9.41){\circle{11.18}}
\put(20,10){\makebox(0,0)[cc]{$=$}}
\put(26,10){\makebox(0,0)[cc]{$\delta$,}}
\end{picture}
$\qquad$
\unitlength 1mm
\begin{picture}(36.59,14.18)(0,0)
\put(30.59,9.18){\makebox(0,0)[cc]{$=$}}
\put(36.59,9.18){\makebox(0,0)[cc]{$\theta$,}}
\linethickness{0.3mm}
\put(20.59,8.59){\circle{11.18}}
\put(21.09,3.18){\circle*{2}}
\put(7,8.5){\circle{11.18}}
\put(7.5,3.09){\circle*{2}}
\end{picture}
$\qquad$
\unitlength 1mm
\begin{picture}(27,15)(0,0)
\linethickness{0.3mm}
\put(8.4,12){\circle*{2}}
\put(9,8){\circle*{2}}
\multiput(6.25,5.62)(0.08,0.08){100}{\line(0,1){0.09}}
\multiput(6.25,13.75)(0.08,-0.08){96}{\line(1,0){0.09}}
\put(20,10){\makebox(0,0)[cc]{$=$}}
\put(27,10){\makebox(0,0)[cc]{$\xi$}}
\multiput(6.25,13.75)(0.08,0.00){94}{\line(0,1){0.09}}
\multiput(6.25,5.62)(0.08,0.00){94}{\line(0,1){0.09}}
\end{picture}
\caption{The closed loops corresponding to the generators of $H$}
\label{ch5closedloop}
\end{figure}
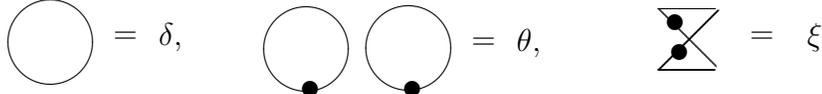

\begin{figure}[!htb]\label{psi}
\begin{center}
\includegraphics[width=.9\textwidth, height=.26\textheight]{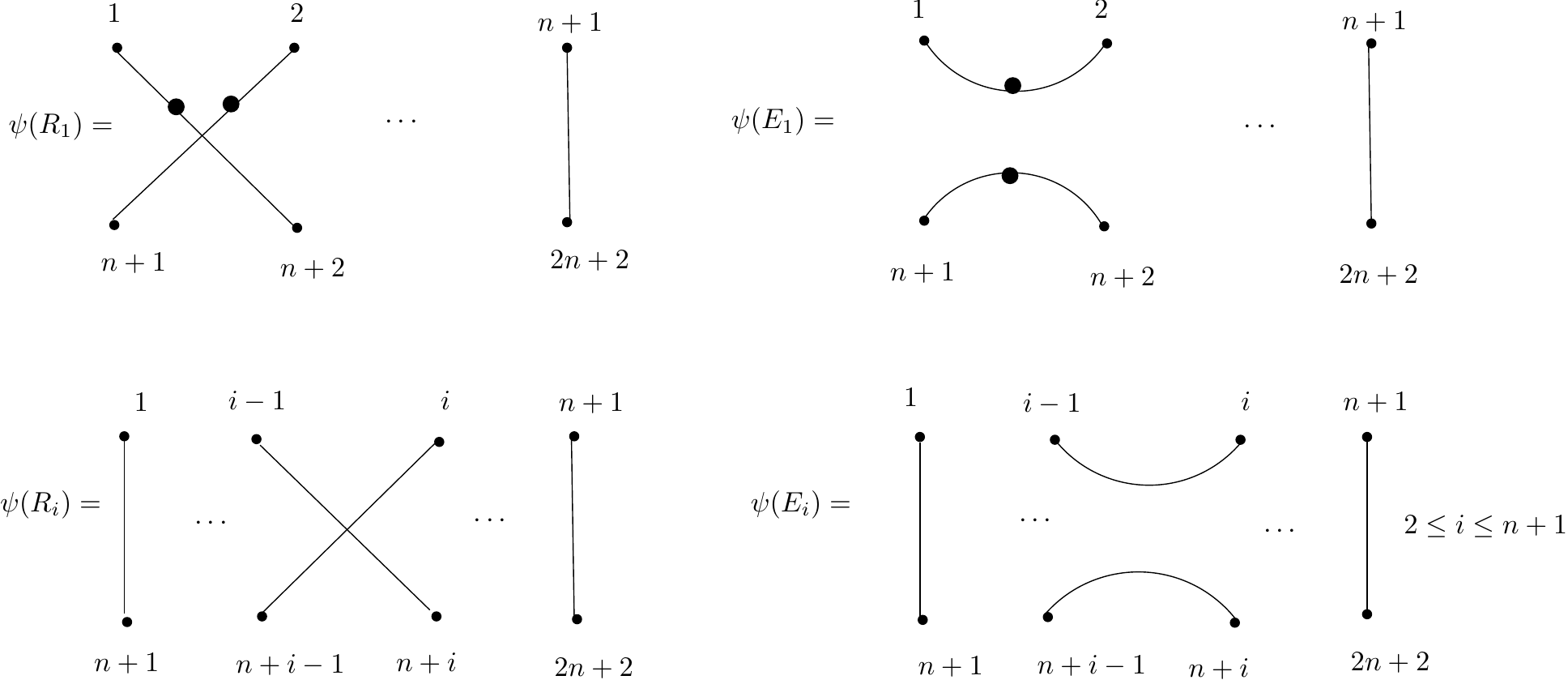}
\end{center}
\caption{The images of the generators of $\Br(\ddD_{n+1})$ under $\psi$}
\label{fig:1}
\end{figure}
Let $c_1$ and $c_2$ decorated $n+1$-connectors, and $\kappa_1$ $\kappa_2\in \{1, \xi, \theta\}$.
Now we describe the product $\kappa_1c_1\kappa_2 c_2$ in \cite[Definition 4.4]{CGW2009} being the form of
$\kappa c$ where $c$ is a decorated $n+1$-connector and $\kappa\in H$.\\
\begin{enumerate}[(i)]
\item As the classical case, draw the diagram $c_1$ and $c_2$, and stack them.
\item Determine the pairing of $c$: for a point at the top  of $c_1$ or the bottom of $c_2$, follow the strand until it ends
in a point at the top of $c_1$ or the bottom of $c_2$. This results in a new pairing for $c$.
\item set $\kappa=\kappa_1\kappa_2$. For each  straightening step in a concatenation of pairs
as carried out in the previous step, check if the pattern shrunk to a straight horizontal line segment occurs as the left hand-sides of the first 20 relations in Figure
\ref{22rels}. If so, multiply $\kappa$ by $\xi\delta^{-1}$; otherwise, $\kappa$ is not changed.
(Compare with the left-hand picture of the last two relations in  Figure \ref{22rels}; this pattern as well as each triple of straight line segments forming a shape appearing in the first 20 relations in Figure \ref{22rels} but whose decoration pattern does not appear in the first 20 relations in
Figure \ref{22rels}, does not change  change $\kappa$.)
\item At this stage, only closed loops remains. closed loops come from strands which have no endpoints in $c$. First simplify loops
by removing crossing as in (iii), i.e. by use of the first 20 relations in Figure \ref{22rels}(again, the configurations not appearing in the figure do not give $\xi\delta^{-1}$) and shrink them using the rules on the bottom lines of the first 20 relations in  Figure \ref{22rels}
(at this stage, factor $\xi\delta^{-1}$ may emerge). Next, replace each closed loops without decoration by $\delta$ (that is, remove the loop and multiply $\kappa$ by $\delta$) and each pair of disjoint closed decoration loops by $\theta$. As the number of decorated pairs is even, what might remain is a simple decorated loop in the presence of a decorated pair; if so , undecorate the pair, remove the decorated loop by multiply
$\kappa$ by $\theta\delta^{-1}$. (Compare with the right-hand side of the last two relations in Figure \ref{22rels}.)
\item If $\theta$ is a factor of $\kappa$, remove all decorations from $c$.
\begin{figure}[!htb]
\begin{center}
\includegraphics[width=.9\textwidth, height=.8\textheight]{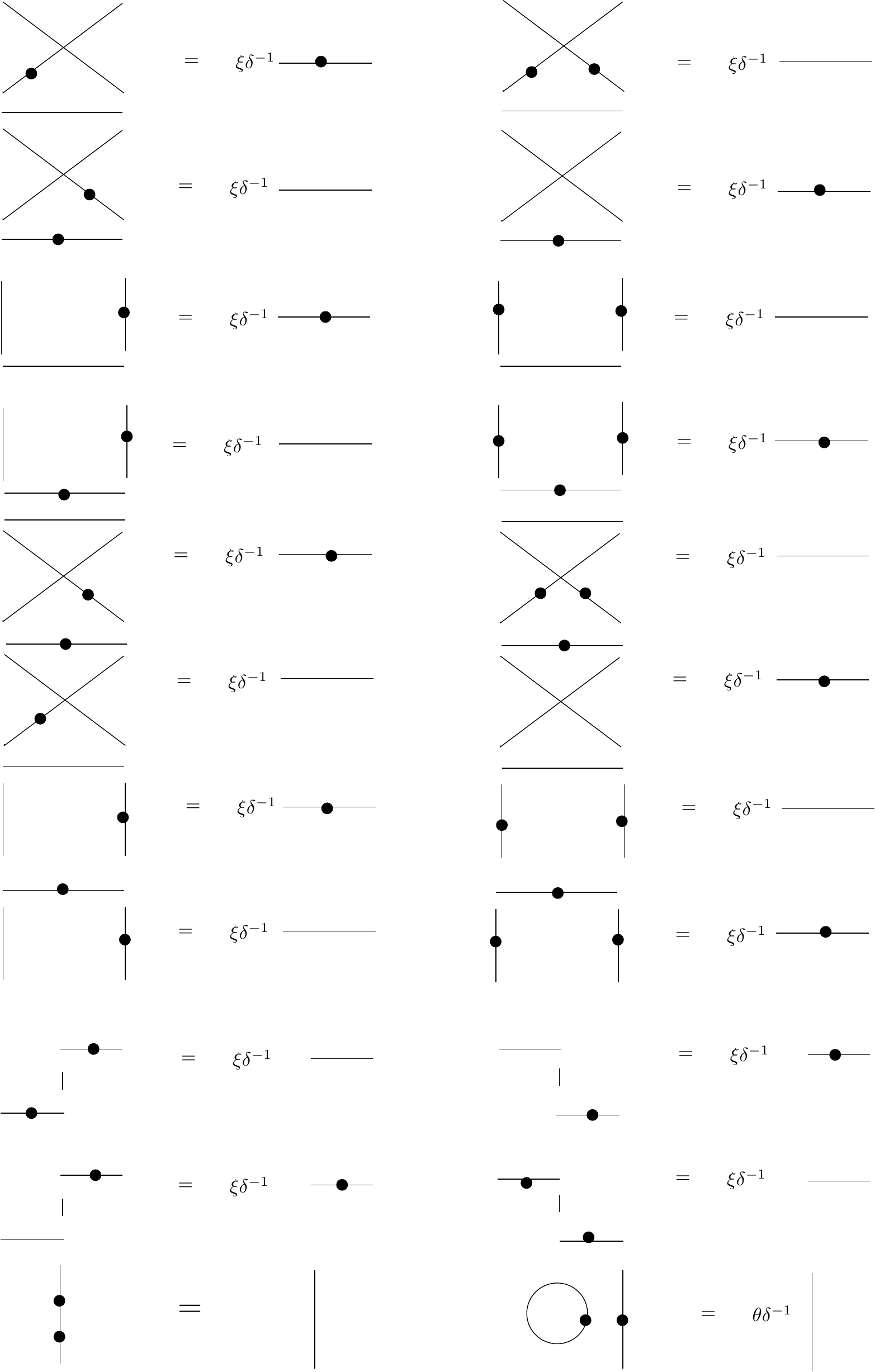}
\end{center}
\caption{22 reduction relations for Brauer diagram algebra of type $\ddD_{n+1}$}
\label{22rels}
\end{figure}
\end{enumerate}
In \cite{CGW2009}, the algebra $\BrD(\ddD_{n+1})$ is proved to be isomorphic
to $\Br(\ddD_{n+1})$ by means of the isomorphism
$\psi:\Br(\ddD_{n+1})\mapsto \BrD(\ddD_{n+1}) $ defined on generators as
in Figure \ref{fig:1}.  It is free over $\Z[\delta^{\pm 1}]$ with basis
$T_{n+1}\cup \xi T_{n+1}^=\cup \theta (T_{n+1}^{0}\cap T_{n+1}^=)$.
\begin{thm} The algebra $\RTL(\ddB_n)$ has rank $C_n+C_{n+1}-1$.
\end{thm}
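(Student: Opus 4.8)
The plan is to establish the matching lower bound for the rank: by Proposition~\ref{rank1} the algebra $\RTL(\ddB_n)$ is spanned by $C_n+C_{n+1}-1$ elements, so it suffices to exhibit $C_n+C_{n+1}-1$ linearly independent images of these in the type-$\ddD_{n+1}$ Brauer diagram algebra. Composing the algebra morphism $\RTL(\ddB_n)\to\TL(\ddD_{n+1})$ coming from Theorem~\ref{4imbeddings} (recall $\TL(\ddD_{n+1})\subseteq\Br(\ddD_{n+1})$) with the diagram isomorphism $\psi\colon\Br(\ddD_{n+1})\to\BrD(\ddD_{n+1})$ of \cite{CGW2009}, whose values on generators are displayed in Figure~\ref{fig:1}, I obtain an algebra homomorphism $\Psi\colon\RTL(\ddB_n)\to\BrD(\ddD_{n+1})$. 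The first step is to compute $\Psi$ on the relevant elements. For $1\le i\le n-1$, $\Psi(e_i)$ is the diagram of $E_{i+2}$, an undecorated cup--cap connector on two adjacent dots. For $e_0$ (whose image in $\Br(\ddD_{n+1})$ is $E_1E_2$) a single dotted closed loop is created during the multiplication, and one finds $\Psi(e_0)=\delta^{k}\theta$ times an undecorated cup--cap connector, for a suitable $k\in\Z$; the key structural point is simply that a $\theta$-factor appears. A short induction from $\hat e_1=e_1e_0e_1$ and $\hat e_{i+1}=e_{i+1}\hat e_ie_{i+1}$, using the Temperley--Lieb relation $U_pU_qU_p=U_p$ for adjacent positions $p,q$, then shows that each $\Psi(\hat e_i)$ equals $\delta^{k_i}\theta$ times the undecorated cup--cap connector sitting at the position of $e_i$; moreover, as $i$ runs over $0,\dots,n-1$ these $n$ positions are the $n$ consecutive adjacent pairs of an $(n+1)$-dot row. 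Since $\theta$ is absorbing in the monoid $H$ (so $\theta\xi=\delta\theta$, $\theta^2=\delta^2\theta$), every nonempty product of the $\hat e_i$ has $\Psi$-image carrying a $\theta$-factor, while every product of $e_1,\dots,e_{n-1}$ is undecorated and $\theta$-free.

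The second step handles $\mathcal{Y}_1$. By Lemma~\ref{Y1} it has rank $C_n$, and $\Psi$ sends its $C_n$ standard Temperley--Lieb monomials in $e_1,\dots,e_{n-1}$ to the $C_n$ undecorated planar matchings that form the classical diagram basis of $\TL(\ddA_{n-1})$ realised inside the row of $n+1$ dots. These are pairwise distinct members of the $\BrD(\ddD_{n+1})$-basis $T_{n+1}\cup\xi T_{n+1}^{=}\cup\theta(T_{n+1}^{0}\cap T_{n+1}^{=})$ recalled in Remark~\ref{not:BrMDD}, so $\Psi|_{\mathcal{Y}_1}$ is injective with image of rank $C_n$, lying entirely in the $\theta$-free undecorated part of the basis.

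The third step handles $\mathcal{Y}_2$. After rescaling $f_i:=\delta^{-1}\hat e_i$, the relations of Lemma~\ref{newrel} used in the proof of Lemma~\ref{Y2} become precisely the type-$\ddA_n$ Temperley--Lieb relations at loop parameter $1$, and the undecorated cup--cap connectors underlying $\Psi(\hat e_0),\dots,\Psi(\hat e_{n-1})$ are, by the first step, exactly the $n$ standard cup--cap generators of the diagram algebra of type $\ddA_n$ on $n+1$ dots. Consequently $\Psi$ carries the $C_{n+1}$ reduced monomials spanning $\mathcal{Y}_2$ to: the identity connector (the empty word), and, for each nonempty reduced word, a power of $\delta$ times $\theta$ times the corresponding non-identity planar matching, which lies in $\theta(T_{n+1}^{0}\cap T_{n+1}^{=})$. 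As distinct planar matchings give distinct basis elements of $\BrD(\ddD_{n+1})$, these $C_{n+1}$ images are linearly independent; hence $\mathcal{Y}_2$ has rank exactly $C_{n+1}$, the surjection $\TL(\ddA_n)\twoheadrightarrow\mathcal{Y}_2$ of Lemma~\ref{Y2} (at $\delta=1$) is an isomorphism, and $\Psi|_{\mathcal{Y}_2}$ is injective.

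Finally I combine the two pieces: $\Psi(\mathcal{Y}_1)$ sits in the span of $\theta$-free basis diagrams, while $\Psi(\mathcal{Y}_2)$ sits in the span of the identity connector together with $\theta$-decorated basis diagrams, so $\Psi(\mathcal{Y}_1)\cap\Psi(\mathcal{Y}_2)$ is exactly the line spanned by $\Psi(1)$. Therefore $\dim\bigl(\Psi(\mathcal{Y}_1)+\Psi(\mathcal{Y}_2)\bigr)=C_n+(C_{n+1}-1)$, and since $\RTL(\ddB_n)=\mathcal{Y}_1+\mathcal{Y}_2$ maps onto this space its rank is at least $C_n+C_{n+1}-1$; with Proposition~\ref{rank1} this yields the asserted equality. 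I expect the main obstacle to lie in the third step: one must track the decorations carefully through the type-$\ddD_{n+1}$ multiplication rules of \cite[Definition~4.4]{CGW2009} summarised in Figure~\ref{22rels}, in order to verify that $\Psi(e_0)$ and each $\Psi(\hat e_i)$ really reduce to $\delta^{k}\theta$ times the asserted undecorated cup--cap connector, so that the $C_{n+1}-1$ nontrivial monomials of $\mathcal{Y}_2$ land on genuinely distinct decorated connectors; the remaining bookkeeping is then routine.
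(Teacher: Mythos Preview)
Your proposal is correct and follows essentially the same route as the paper's proof: both arguments push the spanning set $\mathcal{K}_1\cup\mathcal{K}_2$ through the composite $\psi\phi\colon\RTL(\ddB_n)\to\BrD(\ddD_{n+1})$, observe that the monomials of $\mathcal{Y}_1$ land (as undecorated planar matchings) in the basis component $T_{n+1}$, while the nontrivial monomials of $\mathcal{Y}_2$ land, up to powers of $\delta$, in the disjoint basis component $\theta(T_{n+1}^{0}\cap T_{n+1}^{=})$, and conclude linear independence from the known basis of $\BrD(\ddD_{n+1})$. The paper records the key computation directly as $\psi\phi(\hat e_0)=\theta\delta\,\psi(E_1)$ and $\psi\phi(\hat e_i)=\theta\delta\,\psi(E_{i+2})$ for $1\le i\le n-1$, which is exactly your ``$\delta^{k_i}\theta$ times the undecorated cup--cap at the $i$th adjacent pair'' (recall that once $\theta$ is present all decorations are erased, so $\theta\psi(E_1)$ and $\theta\psi(E_2)$ coincide and the $n$ resulting cup--caps are indeed the consecutive generators of $\TL(\ddA_n)$ on $n+1$ dots); your inductive justification of this formula is a welcome elaboration of a point the paper states without proof.
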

\begin{proof}Suppose that the canonical Temperley-Lieb basis for $\mathcal{Y}_1$ in lemma \ref{Y1} is $\mathcal{K}_1$
and the spanning set for   $\mathcal{Y}_2$ in lemma \ref{Y2} is $\mathcal{K}_2$. We see that $\psi\phi(e_i)=\psi(E_{i+2})$,
which is drawn in Figure \ref{psi} for $i=1,\dots, n-1$.  Then we see those $\{\psi(E_{i+2})\}_{i=1}^{n-1}$ generate the canonical Temperley-Lieb algebra of
type $\ddA_{n-1}$, which has rank $C_n$ , and $\mathcal{K}_1\subset T_{n+1}$, which represents $C_n$ different diagrams.
For $\mathcal{K}_2\setminus\{1\}$, we have $\mathcal{K}_2\setminus\{1\}\subset \theta (T_{n+1}^{0}\cap T_{n+1}^=)$, up to some powers of $\delta$. Because
\begin{eqnarray*}
\psi\phi(\hat{e}_i)&=&\theta \delta\psi(E_{i+2}),\quad  for \quad i=1,\dots, n-1, \\
\psi\phi(\hat{e}_0)&=&\theta \delta\psi(E_{1}),
\end{eqnarray*}
which are generators of $\TL(\ddA_n)$, without considering $\theta$ and $\delta$. So  $\mathcal{K}_2\setminus\{1\}\subset \theta (T_{n+1}^{0}\cap T_{n+1}^=)$, and
represents $C_{n+1}-1$ different diagrams in $\theta (T_{n+1}^{0}\cap T_{n+1}^=)$.
Therefore $\RTL(\ddB_n)$ has rank $C_n+C_{n+1}-1$.
\end{proof}
Using the cellular structure of $\Br(\ddB_n)$ (\cite{CL2011}) and $\Br(\ddC_n)$ (\cite{CLY2010}), we can obtain the following Theorem.
\begin{thm}
The algebras $\RTL(\ddB_n)$ and $\RTL(\ddC_n)$ are cellular algebras in the sense of \cite{GL1996}.
\end{thm}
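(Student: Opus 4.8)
The plan is to realise $\RTL(\ddC_n)$ and $\RTL(\ddB_n)$ as subalgebras of the cellular algebras $\Br(\ddC_n)$ and $\Br(\ddB_n)$ that are spanned, \emph{cell by cell}, by the planar (``Temperley--Lieb-type'') part of the diagrammatic cellular basis of the ambient algebra, and then to apply the elementary observation that such a subalgebra is again cellular: if $B\subseteq A$ is stable under the anti-involution $\ast$ of a cell datum $(\Lambda,M,C,\ast)$ of $A$ and has $R$-basis $\{C^\lambda_{s,t}:\lambda\in\Lambda,\ s,t\in M'(\lambda)\}$ for some subsets $M'(\lambda)\subseteq M(\lambda)$ on which the cell-module action of $A$ restricts (i.e.\ for $a\in B$ and $s,t\in M'(\lambda)$ the structure constants $r_a(s',s)$ inherited from $A$ vanish unless $s'\in M'(\lambda)$), then $(\Lambda',M',C,\ast)$ with $\Lambda'=\{\lambda:M'(\lambda)\neq\emptyset\}$ is a cell datum for $B$. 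As a preliminary, Remark~\ref{Dieck} gives injectivity of $\phi\colon\RTL(\ddC_n)\to\Br(\ddC_n)$, and the rank computation of Section~\ref{sect:rank} --- where $\psi\phi$ sends the spanning set $\mathcal{K}_1\cup\mathcal{K}_2$ to $C_n+C_{n+1}-1$ distinct diagrams --- gives injectivity of $\phi\colon\RTL(\ddB_n)\to\Br(\ddB_n)$; so in both cases I identify $\RTL(M)$ with the subalgebra $\phi(\RTL(M))$ of $\Br(M)$ generated by the $e_i$, which is $\ast$-stable by Proposition~\ref{prop:opp}.

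Next I would recall the diagrammatic cellular bases of \cite{CLY2010} (for $M=\ddC_n$) and \cite{CL2011} (for $M=\ddB_n$): their members $C^\lambda_{s,t}$ are the $\sigma$-symmetric (decorated) Brauer diagrams of type $\ddA_{2n-1}$, resp.\ $\ddD_{n+1}$, with $\Lambda$ recording the number of horizontal strands together with the decoration/centralizer data, $M(\lambda)$ the set of admissible half-diagrams at level $\lambda$, $C^\lambda_{s,t}$ the diagram obtained by gluing the top half-diagram $s$ onto the bottom half-diagram $t$, and $\ast$ the top--bottom flip. The claim to verify is that $\phi(\RTL(M))$ is exactly the $R$-span of those $C^\lambda_{s,t}$ with $s$ and $t$ both in the subset $M'(\lambda)\subseteq M(\lambda)$ of \emph{planar} half-diagrams --- carrying, for $M=\ddB_n$, the decoration prescribed in Section~\ref{sect:rank}, i.e.\ either no decoration (lying in $T_{n+1}$) or the ``$\theta$-type'' decoration (lying in $\theta(T_{n+1}^{0}\cap T_{n+1}^{=})$). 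This uses the explicit diagram images of the generators $e_i$ (Figures~\ref{Cgenerators} and~\ref{fig:1}) and the fact that a concatenation of planar diagrams is again planar, so that the $e_i$ generate precisely the span of the planar part of the basis; in particular the index set in each cell is a product set $M'(\lambda)\times M'(\lambda)$. For $M=\ddC_n$ this is nothing but the Temperley--Lieb shadow of \cite[Theorem~1.1]{CLY2010}: the $\sigma$-symmetric planar diagrams of type $\ddA_{2n-1}$ form a basis of $\STL(\ddA_{2n-1})\cong\RTL(\ddC_n)$.

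The remaining point is that the cell-module action restricts, which follows from the same planarity fact: for $a\in\phi(\RTL(M))$ and $s,t\in M'(\lambda)$, gluing $a$ on top of $C^\lambda_{s,t}$ produces, at level $\lambda$, half-diagrams obtained by concatenating the planar $a$ with the planar $s$, hence planar, so $r_a(s',s)\neq 0$ forces $s'\in M'(\lambda)$; the correction term lies in $\phi(\RTL(M))\cap\Br(M)^{>\lambda}$, which by the span description above is the span of the $C^\mu_{u,v}$ with $\mu>\lambda$ and $u,v\in M'(\mu)$, exactly as the definition of \cite{GL1996} demands. I expect the real work --- and the main obstacle --- to be the bookkeeping behind the claim of the previous paragraph: pinning down $M'(\lambda)$ and confirming that $\phi(\RTL(M))$ meets the cellular basis in a product set in each cell, and that no straightening step produces a non-planar half-diagram at level $\lambda$. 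For type $\ddB_n$ this is the delicate case, because the image is a union of two strata, $\mathcal{K}_1$ and $\mathcal{K}_2\setminus\{1\}$, lying in different decoration classes of the $\ddD_{n+1}$-diagram basis, and one must check that together they still cut out a product set in each cell and are compatible with the reduction relations of \cite[Definition~4.4]{CGW2009} (Figure~\ref{22rels}). Once this is established, cellularity of $\RTL(\ddB_n)$ and $\RTL(\ddC_n)$ follows from the observation in the first paragraph.
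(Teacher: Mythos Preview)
Your proposal is correct and follows essentially the same route as the paper: both deduce cellularity of $\RTL(\ddB_n)$ and $\RTL(\ddC_n)$ from the cellular structure of the ambient Brauer algebras $\Br(\ddB_n)$ and $\Br(\ddC_n)$ established in \cite{CL2011} and \cite{CLY2010}. The paper's own proof is in fact just the single sentence ``Using the cellular structure of $\Br(\ddB_n)$ (\cite{CL2011}) and $\Br(\ddC_n)$ (\cite{CLY2010}), we can obtain the following Theorem'', so you have supplied considerably more detail than the paper does --- in particular the general mechanism for inheriting a cell datum along a $\ast$-stable subalgebra spanned cell-by-cell by a product subset $M'(\lambda)\times M'(\lambda)$, and the identification of the relevant planar/decorated strata.
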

\begin{rem}
By \cite{F1997}, it is known that the Temperley-Lieb algebras of type $\ddF_n$ ,  $\TL(\ddF_n)$ for $n\geq 4$ is of finite rank.
We can verify that the $\RTL(\ddF_n)$ is a quotient algebra of $\TL(\ddF_n)$, so  $\RTL(\ddF_n)$  is  also of finite rank. But it is not easy to give the precise rank  here,
we will leave it for some further research.
\end{rem}

Shoumin Liu\\
Email: s.liu@sdu.edu.cn\\
School of mathmatics, Shandong University\\
Shanda Nanlu 27, Jinan, \\
Shandong Province, China\\
Postcode: 250100

\end{document}